\newcommand{\bbN}{{\mathbb N}}
\newcommand{\bbQ}{{\mathbb Q}}
\newcommand{\bbR}{{\mathbb R}}
\newcommand{\bbZ}{{\mathbb Z}}
\newcommand{\Ker}{\operatorname{Ker}}
\newcommand{\half}{\frac{1}{2}}
\newcommand{\efface}[1]{}
\newtheorem{mthm}{Theorem}
\newtheorem*{conj}{Conjecture}
\newtheorem{theorem}{Theorem}[section]
\newtheorem{lemma}[theorem]{Lemma}
\newtheorem{cor}[theorem]{Corollary}
\newtheorem{prop}[theorem]{Proposition}
\theoremstyle{definition}
\newtheorem{defn}[theorem]{Definition}
\newtheorem{example}[theorem]{Example}
\newtheorem{remark}[theorem]{Remark}
\numberwithin{equation}{section}
\begin{document}
\title[On the horofunction boundary of discrete Heisenberg group]{On the horofunction boundary of discrete Heisenberg group}

\author{Uri Bader}
\author{Vladimir Finkelshtein}

\address{Uri Bader, Faculty of Mathematics and Computer Science,
The Weizmann Institute of Science, 234 Herzl Street, Rehovot 7610001, Israel}
\email{uribader@gmail.com}
\address{Vladimir Finkelshtein, Mathematisches Institut, Georg-August-Universit\"{a}t G\"{o}ttingen, Bunsenstra\ss e 3-5, 37073 G\"{o}ttingen, Germany}
\email{filyok@gmail.com}

\begin{abstract}
 We consider finitely generated group endowed with a word metric. The group acts on itself by isometries, which induces an action on its horofunction boundary. The conjecture is that nilpotent groups act trivially on their reduced boundary. We will show this for the Heisenberg group. The main tool will be a discrete version of the isoperimetric inequality.
\end{abstract}
\subjclass[2010]{20F65,	20F18}

\maketitle



\section{Introduction}

Every metric space embeds in the space of continuous functions on it,
and its image there, modulo the constant functions is precompact.
The functions in the closure are denoted horofunctions, the closure of the image is denoted  the horofunction compactification
and the boundary is denoted the horofunction boundary or the horoboundary. This notion is due to Gromov \cite{G}.
The horoboundary carries a natural equivalence relation.
The corresponding quotient space is called the {\bf reduced horoboundary}.

Given a group with a specified set of generators,
one obtains a metric space by considering the corresponding word metric on the group,
and thus one gets corresponding horoboundary and reduced horoboundary.
The group acts naturally on those spaces.
Both of those spaces might depend on the choice of generators,
but in some cases topological and dynamical properties of the action do not.

A well-known example is given by hyperbolic groups,
for which the reduced horoboundary coincides with the Gromov boundary
and, in particular, does not depend on choice of generators
(while the horoboundary does).
Hyperbolic groups indeed provide a reach class of examples for groups
with non-trivial actions on their reduced horoboundaries.

On the other extreme, the reduced horoboundary of a finitely generated abelian group
depends on a choice of generators, but the boundary behavior is rather simple, as seen in the following theorem.

\begin{mthm}\label{thmA}
Given a finitely generated abelian group endowed with any finite set of generators,
the corresponding reduced horoboundary is finite and
the group action on it is trivial.
\end{mthm}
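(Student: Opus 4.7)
The plan is to reduce the statement to the analysis of horofunctions of a polyhedral norm on $\bbR^n$. I write the group as $G = \bbZ^n \oplus F$ with $F$ finite, take a finite generating set $S$ of $G$, and let $\pi \colon G \to \bbZ^n$ be the projection. The finiteness of $F$ gives $|d_S(e, g) - d_{\pi(S)}(0, \pi(g))| \leq C$ for a uniform $C$. A Burago-type argument then gives $|d_{\pi(S)}(0, v) - \|v\|_S| \leq C'$ on $\bbZ^n$, where $\|\cdot\|_S$ is the norm on $\bbR^n$ with unit ball $\conv(\pi(S) \cup -\pi(S))$. Since uniformly bounded perturbations of the metric induce bijections of reduced horoboundaries, I may work with the polytope norm on $\bbZ^n$.

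For the classification of horofunctions, consider a sequence $x_n$ going to infinity in $\bbZ^n$. After passing to a subsequence, the set $I$ of vertices of the dual polytope $B^*$ achieving $\|x_n\|_S = \max_\xi \langle \xi, x_n \rangle$ stabilizes; let $F \subseteq B^*$ be the face it spans. Exploiting that the maximum in $\|y - x_n\|_S = \max_\xi \langle \xi, y - x_n \rangle$ is asymptotically realized on $-F$ (by symmetry of $B^*$), a direct computation yields
$$\lim_{n \to \infty} \bigl(\|y - x_n\|_S - \|x_n\|_S\bigr) = \max_{\xi \in -F} \langle \xi, y \rangle.$$
Thus every horofunction has the form $\phi_F(y) = \max_{\xi \in -F} \langle \xi, y \rangle$ for some face $F$ of $B^*$. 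Since $B^*$ has only finitely many faces, the reduced horoboundary is finite.

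Triviality of the action is the easy part and uses the abelian structure essentially: the word metric is bi-invariant, so $d_S(y - g, y) = d_S(0, g) = |g|_S$ uniformly in $y$. Since every horofunction $\phi$ is $1$-Lipschitz,
$$|(g \cdot \phi)(y) - \phi(y)| = |\phi(y - g) - \phi(y)| \leq |g|_S,$$
uniformly in $y$, so $g \cdot \phi \sim \phi$ in the reduced horoboundary.

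The main obstacle is the polytope-norm computation of the second paragraph, which requires careful combinatorial bookkeeping of the active vertex set along sequences (including verification that differing subsequences yielding the same face $F$ produce horofunctions that agree up to a bounded error). Note that the triviality-of-action argument fundamentally uses bi-invariance and breaks down for non-abelian nilpotent groups, where conjugation can distort word lengths unboundedly; this explains why the non-abelian Heisenberg case addressed later in the paper requires genuinely new ideas.
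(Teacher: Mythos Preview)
Your triviality argument is correct and coincides with the paper's: the paper observes that abelian groups satisfy property EH trivially (since $G'=\{e\}$) and invokes Proposition~\ref{EH}, whose proof with EH constant $D=0$ is exactly your $1$-Lipschitz estimate.

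For finiteness your route---pass to a polyhedral norm on $\bbR^n$ and index reduced horofunctions by faces of the dual polytope $B^*$---is legitimate but differs from the paper's, which stays with the discrete word metric, introduces a combinatorial notion of \emph{face of $S$} (for $\bbZ^n$ these are the faces of $\conv(S)$ intersected with $S$), and classifies geodesic rays by such faces directly. The two face posets are in order-reversing bijection, so the counts agree.

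There is, however, a genuine gap in your second paragraph: the stabilized active set $I$ can span a face strictly smaller than the one governing the horofunction, and then your displayed limit is false. In $(\bbR^2,\|\cdot\|_\infty)$ with $x_n=(n,n-1)$ one has $I=\{e_1\}$ for all $n\geq 2$, yet
\[
\|y-x_n\|_\infty-\|x_n\|_\infty \;=\; \max(-y_1,\,-y_2-1),
\]
which differs from $\phi_{\{-e_1\}}(y)=-y_1$ by the unbounded function $\max(0,\,y_1-y_2-1)$; the correct face is the edge $[-e_1,-e_2]$. The repair is to pass to a subsequence along which $c_\xi:=\lim(\|x_n\|+\langle\xi,x_n\rangle)\in[0,\infty]$ exists for every vertex $\xi$ of $B^*$, set $J=\{\xi: c_\xi<\infty\}$, and prove that $J$ is the vertex set of a face of $B^*$; the horofunction is then $\max_{\xi\in J}(\langle\xi,y\rangle-c_\xi)$, within $\max_{\xi\in J}c_\xi$ of $\phi_J$. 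So the ``main obstacle'' you flag is not merely bookkeeping of your mechanism---the mechanism itself must be replaced.
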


More generally, we conjecture the following.

\begin{conj}
Given a finitely generated nipotent group endowed with any finite set of generators,
the action of the group on its reduced horoboundary is trivial.
\end{conj}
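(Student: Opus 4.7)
The plan is to establish the conjecture by induction on the nilpotency class $c$ of $G$, taking as base case the abelian situation (Theorem A) and propagating along the lower central series using the same geometric ingredients (commutator estimates and a discrete isoperimetric inequality) that are used for the Heisenberg group in this paper.

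Writing a horofunction as $h(y) = \lim_n (|y^{-1} x_n| - |x_n|)$ for some sequence $x_n$ escaping to infinity in $G$, and defining the group action by $(g \cdot h)(y) = h(g^{-1} y) - h(g^{-1})$, one computes that
\[
D_g(y) := (g\cdot h)(y) - h(y) = \lim_n \bigl[\, |y^{-1} g x_n| - |g x_n| - |y^{-1} x_n| + |x_n| \,\bigr].
\]
Triviality of the action on the reduced boundary is the statement that, for every $g \in G$ and every horofunction $h$, the function $y \mapsto D_g(y)$ is bounded. The naive triangle-inequality bound $|D_g(y)| \leq |g| + |y^{-1} g y|$ already suffices in the abelian case, where $|y^{-1} g y| = |g|$, but conjugation distorts word length polynomially in any genuinely non-abelian nilpotent group, so the substance of the argument must lie in cancellations within the four-term expression rather than in treating the two differences separately.

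For the inductive step, let $Z$ denote the last nontrivial term of the lower central series of $G$, so $Z$ is central and $G/Z$ is $(c-1)$-step nilpotent. The strategy is to combine three ingredients. First, the image of $h$ on $G/Z$ defines a horofunction there, and the inductive hypothesis on $G/Z$ controls $D_g(y)$ modulo the $Z$-direction. Second, the commutator identity $gx_n = x_n g\cdot [g,x_n]$, in which $[g,x_n]\in [G,G]$ sits strictly deeper in the lower central series, reduces the task of inserting $g$ at the right-end of $x_n$ to a question about a shorter commutator. Third, one invokes a discrete isoperimetric inequality in $G$ that generalizes the Heisenberg estimate proved in this paper: it bounds the word length of any element of $Z$ in terms of the area (or higher-order invariant, for class $c\ge 3$) enclosed by a loop representing it, which in turn forces the $Z$-component of $D_g(y)$ to be bounded by (a constant times) the distortion of $Z$ in $G$.

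The principal obstacle is upgrading the isoperimetric inequality from the Heisenberg group to arbitrary nilpotent groups. For Heisenberg the inequality comes essentially from the discrete two-dimensional isoperimetric inequality filtered through the symplectic form on $\bbZ^2 = G/[G,G]$; in higher nilpotency class the natural analogue involves higher-degree multilinear invariants on $G/[G,G]$ and sharp discrete constants are not known in the generality we need. A secondary obstacle is uniformity in the generating set: distortion exponents and isoperimetric constants depend quantitatively on the chosen word metric, so the induction must be arranged so that these quantities compound acceptably along the lower central series without forcing the final bound on $D_g$ to blow up.
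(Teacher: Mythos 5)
The statement you are addressing is the paper's \emph{Conjecture}, which the paper explicitly does not prove: it is established only for abelian groups (Theorem A) and for the Heisenberg group (Theorem B), and the authors present the Heisenberg case as ``the first non-trivial case'' of an open problem. Your proposal is likewise not a proof but a research outline, and you concede as much: the ``principal obstacle'' you name --- a discrete isoperimetric inequality, or more precisely an asymptotic norm formula analogous to Proposition 5.1, valid for nilpotent groups of class $c\ge 3$ --- is exactly the missing ingredient, and nothing in your sketch supplies it. Until that input exists, the argument does not close, so there is a genuine gap by your own admission.

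Beyond that, two specific steps in your inductive scheme are problematic. First, your claim that ``the image of $h$ on $G/Z$ defines a horofunction there'' is false in general: a horofunction on $(G,d_S)$ is a limit of normalized distance functions and need not be constant on cosets of $Z$, so it does not descend to a function on $G/Z$ at all; relating $|\cdot|_S$ on $G$ to $|\cdot|_{\bar S}$ on the quotient plus a central correction is precisely the hard content of Proposition 5.1, not something you may assume. Second, ``the $Z$-component of $D_g(y)$'' is not well defined --- $D_g(y)$ is a real number, not a group element --- so the decomposition you propose needs to be replaced by an actual quantitative statement. For comparison, the paper's route avoids your four-term limit entirely: Proposition 2.3 (property EH) reduces triviality of the action on the reduced horoboundary to showing that for each $g_0\in G'$ one has $\bigl||g_0y|_S-|y|_S\bigr|\le D$ for all $y$ outside a finite set, with $D$ independent of $g_0$; the commutator manipulation is done once and for all there, and all the analytic work is concentrated in verifying EH via the norm asymptotics. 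If you want to pursue the conjecture, the productive target is establishing property EH for higher-step nilpotent groups, where the new difficulty is that $G'$ is no longer central and no longer cyclic, and the relevant ``area'' invariants become higher-degree and generator-dependent, exactly as you note in your second obstacle.
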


The purpose of this paper is to establish this conjecture for the first non-trivial case.

\begin{mthm}\label{thmB}
Given any finite set of generators of the discrete Heisenberg group,
the action of the group on the corresponding reduced horoboundary is trivial.
\end{mthm}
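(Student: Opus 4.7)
The triviality of the $H$-action on the reduced horoboundary means that for every horofunction $h$ and every $g \in H$, the function
\[
f_g(w) \defq (g \cdot h)(w) - h(w) = h(g^{-1}w) - h(g^{-1}) - h(w)
\]
is bounded on $H$. The set of $g$ for which this holds is a subgroup, so it suffices to verify the property on a generating set, which we may take as $\{x, y, z\}$ with $z = [x,y]$ central. The central case is immediate: since $w^{-1}zw = z$ for every $w$, we have $|h(z^{-1}w) - h(w)| \le d(z^{-1}w, w) = |z|$ uniformly in $w$, and absorbing the constant $-h(z^{-1})$ gives $|f_z(w)| \le 2|z|$. So the center always acts trivially.

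For the non-central generator $g = x$ (and $g = y$ by symmetry) we must show that $\lim_n [d(x^{-1}w, g_n) - d(w, g_n)]$ is bounded in $w$, where $g_n$ is a defining sequence for $h$. The naive estimate $\le d(x^{-1}w, w) = |w^{-1}xw|$ fails, since conjugates of $x$ have length $\asymp \sqrt{|b(w)|}$ when the $y$-exponent of $w$ is large; hence one must extract cancellation from the limit rather than bounding the pre-limit. The key observation is that for every fixed $w$, the sequence $w^{-1}g_n$ has the \emph{same} asymptotic direction as $g_n$ in the Carnot--Carath\'eodory asymptotic cone of $H$: writing elements in normal form $x^a y^b z^c$, the normalised parameters $(a/|g_n|,\, b/|g_n|,\, c/|g_n|^2)$ have the same limit point (the perturbation of the central coordinate by a term of order $b(w)\cdot a(g_n)$ vanishes after normalisation by $|g_n|^2$). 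The desired statement would then follow from: two sequences with the same asymptotic direction induce horofunctions that differ by a bounded function, hence lie in the same class of the reduced horoboundary.

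This last claim is where the discrete isoperimetric inequality enters. The inequality quantifies $|g|_H$ in terms of $|\pi(g)|_{\bbZ^2}$ and the ``central excess'' measuring how far $c(g)$ exceeds the signed area that a near-geodesic to $\pi(g)$ in $\bbZ^2$ can enclose for free. Using it one splits into regimes by the asymptotic behaviour of $\pi(g_n)$. If $\pi(g_n) \to \infty$ in $\bbZ^2$, the isoperimetric estimate allows one to approximate $h$, modulo a bounded function, by the pullback under $\pi: H \to \bbZ^2$ of a horofunction of $\bbZ^2$; triviality of the $H$-action on the reduced horoboundary of $H$ then follows from Theorem~\ref{thmA} applied in $\bbZ^2$. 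If $\pi(g_n)$ stays bounded, the sequence is essentially central, $|g_n| \asymp \sqrt{|c(g_n)|}$, and the asymptotics $\sqrt{n+c}-\sqrt{n} \to 0$ force the horofunction to be a bounded function on $H$, hence trivial in the reduced horoboundary. The main obstacle is implementing the first regime: making the isoperimetric comparison precise enough to produce a genuine $\bbZ^2$-horofunction as approximator, with uniform control on the bounded perturbation over all $w \in H$ and all directions of $\pi(g_n)$.
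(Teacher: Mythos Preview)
Your proposal is not a proof: you explicitly flag the main regime ($\pi(g_n)\to\infty$) as an unresolved ``obstacle'', and the other regime contains an error. When $\pi(g_n)$ stays bounded the limiting horofunction is \emph{not} bounded on $H$. Taking $g_n=(0,0,c_n)$ with $c_n\to\infty$ and $w=(a,b,c)$, one has $w^{-1}g_n=(-a,-b,c_n-c+ab)$, and the paper's norm estimate (Proposition~\ref{Heis}) gives, up to $O(1)$,
\[
|w^{-1}g_n|-|g_n|\;\approx\;\Big(\sqrt{(c_n-c+ab)/\gamma}-|(a,b)|_{\bar S}\Big)-\sqrt{c_n/\gamma}\;\xrightarrow[c_n\to\infty]{}\;-|(a,b)|_{\bar S},
\]
which is unbounded in $w$. (No horofunction on an unbounded proper space is ever bounded.) The $\sqrt{n+c}-\sqrt n\to0$ observation only kills the dependence on the \emph{central} coordinate of $w$; the dependence on $\pi(w)$ survives. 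Of course $w\mapsto-|\pi(w)|_{\bar S}$ is still $G$-invariant modulo bounded functions, so this case can be salvaged, but not by the argument you wrote.

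More importantly, the paper avoids your entire dichotomy. It does \emph{not} try to analyse horofunctions directly for the non-central generators $x,y$. Instead it introduces property~EH: a uniform bound $\big||g_0y|-|y|\big|\le D$ for all sufficiently long $y$, required only for $g_0$ in the \emph{commutator subgroup} $G'$. Proposition~\ref{EH} shows, via the identity $x^{-1}gw=[x^{-1},g]\,gx^{-1}w$, that this commutator-only hypothesis already forces $|g\phi(x)-\phi(x)|\le D+|g|$ for every $g\in G$ and every horofunction $\phi$. Since $G'=Z(G)$ in the Heisenberg group, EH amounts to controlling $\big||(0,0,z_0)\cdot w|-|w|\big|$ for $w$ large, and this is exactly what the isoperimetric norm estimate (Proposition~\ref{Heis}) delivers in the regime $|z|\ge L\max\{x^4,y^4\}$, with an elementary conjugation trick $(0,\pm1,0)w_1(0,\mp1,0)$ handling the complementary regime. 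The passage through EH is what lets the paper sidestep the ``main obstacle'' you identified: one never needs to compare $|x^{-1}w^{-1}g_n|$ with $|w^{-1}g_n|$ directly, only $|z_0\cdot y|$ with $|y|$ for central $z_0$.
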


The action of the Heisenberg group on its horoboundary was previously studied by Walsh in \cite{W}, where he established the existence of finite orbits.

We will prove the theorem above by introducing a new property: {\bf property EH},
which implies the triviality of the action of a group on its reduced horoboundary. 
Establishing property EH for the Heisenberg group will lead us to consider the norm function of the group (see \cite{B} for explicit description of this norm with standard generators) 
and, in particular, to prove a discrete version of the planar isoperimetric inequality, which we believe carries some independent interest.

Section \ref{sec2} below will be devoted to setting our notation and framework,
and in particular, for discussing property EH and its relevance to Theorems \ref{thmA} and \ref{thmB}.
We will discuss abelian groups and prove Theorem \ref{thmA} in section \ref{sec3}.
In section \ref{sec4} we will prove our discrete isoperimetric inequality.
In section \ref{sec5} we will discuss the norm function on the Heisenberg group
and prove Theorem \ref{thmB}.

\section{Reduced Horoboundaries and Property EH}\label{sec2}

Let $(X,d)$ be a proper metric space.
Endow $C(X)$ by the Frechet structure of uniform convergence on compact sets.
We denote by $C^0(X)$ the quotient Frechet space obtained by $C(X)$ when moding up the one dimensional subspace of constant functions.
We get a natural map:
\[ X\hookrightarrow C(X) \to C^0(X), \quad x \mapsto d(x,\cdot) \mapsto [d(x,\cdot)]. \]
It is trivial to check that the composition map is injective
(for this, it is enough to consider two point sets),
that it is a homeomorphism on the image (for $X$ proper)
and that the image is precompact (by Arzela-Ascoli theorem).
We denote the closure of the image of $X$ in $C^0(X)$ by $\overline{(X,d)}$ and, 
upon identifying $X$ with its image, we set $\partial (X,d)=\overline{(X,d)}-X$.
These are the horocompactification and the {\bf horoboundary} of $X$.

Consider the space $C_b(X)<C(X)$, consisting of all bounded continuous functions.  Let $C^r(X)=C(X)/C_b(X)$ be the quotient space.
The {\bf reduced horoboundary} of $X$, denoted by $\partial^r (X,d)$, is the image of $\partial (X,d)$ in $C^r(X)$.

For a finitely generated group $G$ with a finite symmetric set of generators $S$,
we denote the $S$-word metric on $G$ by $d_S$ and the corresponding norm on $G$ by $|\cdot|_S$.
We denote $\partial (G,S)$ and $\partial^r (G,S)$ for $\partial (G,d_S)$ and $\partial^r (G,d_S)$.
When the set $S$ is understood we simply denote the norm by $| \cdot |$ and the boundaries by $\partial G$ and $\partial^r G$.

\begin{defn}
Given a group $G$,
a finite symmetric set of generators $S\subset G$ is said to satisfy the {\bf property  EH} if
there exists a constant (called the EH constant of $S$) $D>0$ such that for every $g_0\in G'$ (the commutator group)
there exists $n\in\bbN$ satisfying
\[ \mbox{for all } y\in G, \quad\quad |y|_S>n \quad \Rightarrow \quad \big| |g_0y|_S-|y|_S \big|\leq D. \]
The group $G$ itself is said to satisfy EH if every finite symmetric set of generators of it satisfies EH.
\end{defn}

\begin{prop} \label{EH}
Let $G$ be a finitely generated group and let $S\subset G$ be a finite symmetric generating set satisfying EH.
Then the action of $G$ on $\partial^r(G,S)$ is trivial.
\end{prop}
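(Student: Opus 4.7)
The plan is to represent a horofunction as $h(y) = \lim_n (|y^{-1} x_n| - |x_n|)$ for a sequence $x_n \in G$ with $|x_n| \to \infty$ (using $|x_n^{-1} y| = |y^{-1} x_n|$, since $S$ is symmetric), and to show that for every $g \in G$ the function $L_g h - h$ is bounded on $G$, where $L_g$ denotes the induced left translation action on $C(G)$. This is exactly what is needed, since $\partial^r (G,S)$ is by definition the image of $\partial(G, S)$ in $C^r(G) = C(G)/C_b(G)$.

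First I would record the action: because $d_S$ is left-invariant, $L_g$ sends $d_S(x, \cdot)$ to $d_S(gx, \cdot)$, so on the boundary
\[ (L_g h)(y) - h(y) \;=\; \lim_n \big( |y^{-1} g x_n| - |y^{-1} x_n| \big). \]
Next I would use the rewrite $y^{-1} g x_n = g \cdot [g^{-1}, y^{-1}] \cdot (y^{-1} x_n)$, where $\eta_y := [g^{-1}, y^{-1}] = g^{-1} y^{-1} g y \in G'$. Setting $w_n := y^{-1} x_n$, the expression above becomes $\lim_n (|g \eta_y w_n| - |w_n|)$.

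By the triangle inequality, $\big| |g \eta_y w_n| - |\eta_y w_n| \big| \leq |g|$, uniformly in $y$ and $n$. Property EH applied to $\eta_y \in G'$ yields a threshold (depending on $\eta_y$, hence on $y$) beyond which $\big| |\eta_y w_n| - |w_n| \big| \leq D$. Since $y$ is fixed while $|x_n| \to \infty$, we have $|w_n| \geq |x_n| - |y| \to \infty$, so this threshold is eventually crossed, and the two estimates combine to give $|(L_g h)(y) - h(y)| \leq |g| + D$ for every $y$. This bound is uniform in $y$, so $L_g h - h \in C_b(G)$ and $L_g$ fixes the class of $h$ in $\partial^r(G, S)$.

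The main subtlety to watch is that EH only controls elements of the commutator subgroup and the EH threshold depends on the element; for a general $g \in G$ one must therefore route through the commutator $[g^{-1}, y^{-1}]$ and verify that the $y$-dependence of the threshold is harmless. This works because we need only a pointwise bound in $y$, and the limit in $n$ is the last step taken, so for each fixed $y$ the EH threshold is eventually surpassed by the growing $|w_n|$. The $|g|$ contribution, absorbed by one application of the triangle inequality, is what allows us to bootstrap EH from $G'$ to all of $G$.
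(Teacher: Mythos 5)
Your proof is correct and follows essentially the same route as the paper's: rewrite $y^{-1}gx_n$ so that a commutator (an element of $G'$) acts on a far-away group element, apply property EH to that commutator, and absorb the remaining discrepancy by the triangle inequality, yielding the uniform bound $D+|g|$. The only cosmetic differences are that you work directly with the limit representation $h(y)=\lim_n(|y^{-1}x_n|-|x_n|)$ and use a single commutator with the two-sided estimate $\bigl||gu|-|u|\bigr|\le|g|$, whereas the paper fixes a witness $w$ and uses the two commutators $[x^{-1},g]$ and $[x^{-1},g^{-1}]$ for the two directions of the inequality.
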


\begin{proof}
Pick $\phi\in C(G)$ for which 
$[\phi] \in \partial G$.
Fix $g\in G$.
We need to show that $g\phi-\phi \in C_b(G)$.
We will show that for every $x\in G$,
\[ |g\phi(x)-\phi(x)| \leq D+|g|, \]
where $D$ is the EH constant of $S$.
Fix $x\in G$.
Consider the elements $[x^{-1},g], [x^{-1},g^{-1}]\in G'$.
Then
there exists $n\in\bbN$ such that for every $y\in G$ with $|y|>n$,
\begin{equation} \label{eq:y}
\left| |[x^{-1},g]y|-|y| \right|, \left| |[x^{-1},g^{-1}]y|-|y| \right|\leq D.
\end{equation}
Let $w\in G$ be an element with $|w|>n+2|g|+|x|$ such that
\begin{equation} \label{eq:phi=w}
|d(w,g^{-1}x)-d(w,x)| = |\phi(g^{-1}x)-\phi(x)|.
\end{equation}
Then
\[ d(gw,x)=|x^{-1}gw|=|[x^{-1},g]gx^{-1}w|, \]
and since $|gx^{-1}w|>n+|g|\geq n$ we get by substituting $y=gx^{-1}w$ in equation ~\ref{eq:y}
\begin{equation} \label{eq:leq}
d(gw,x) \leq |gx^{-1}w|+D \leq |x^{-1}w|+|g|+D= d(w,x) +|g|+D.
\end{equation}
On the other hand,
\[ d(w,x)=|x^{-1}w|=|[x^{-1},g^{-1}]g^{-1}x^{-1}gw|, \]
and since $|g^{-1}x^{-1}gw|>n$ we get by substituting $y=g^{-1}x^{-1}gw$ in equation ~\ref{eq:y}
\begin{equation} \label{eq:geq}
d(w,x) \leq |g^{-1}x^{-1}gw| +D \leq |x^{-1}gw|+|g| +D = d(gw,x)+|g|+D.
\end{equation}
Equations ~\ref{eq:leq},~\ref{eq:geq} together with equation ~\ref{eq:phi=w} give the desired inequality,
\[ |g\phi(x)-\phi(x)|=|\phi(g^{-1}x)-\phi(x)|=|d(gw,x)-d(w,x)| \leq D+|g|. \]
\end{proof}

\section{Abelian Groups - Proof of Theorem A.}\label{sec3}

In this section we consider finitely
generated abelian groups
and discuss their horoboundaries and reduced horoboundaries.
This question was studied by Rieffel \cite{R} and Develin \cite{D} in different generality.

Observe that every finitely generated abelian group is trivially EH, hence
the second part of Theorem \ref{thmA} follows immediately by
Proposition~\ref{EH}.
We are left to show that the reduced horoboundary is finite.
This is a consequence of the more general Proposition~\ref{abelian} below. To motivate the statement we first conisder a simple example.

\begin{example} \label{G=Z}
Let $G=\bbZ$ with the generating set $S=\{-1,+1\}$.
The horoboundary consists of (the classes of) the functions $\{x,-x\}$.
The map to the reduced horoboundary is a bijection.

Consider now the generating set $T=\{\pm1,\pm10\}$ for $G$.
The reduced horoboundary still consists of two points
(the classes of the functions $\pm\frac{1}{10}x$),
but the horoboundary consists of 20 points and the map is 10 to 1.
The horofunctions are limits of sequences of the distance functions from the points
\[ 10n, 10n+1,\ldots, 10n+9 \quad \mbox{and} \quad -10n, -10n+1,\ldots, -10n+9. \]
\end{example}

Note that the fibers of the map
$\partial G\to \partial^r G$
are subsets of cosets of $C_b(G)$, hence carry natural metrics.
In Example~\ref{G=Z}, both fibers of
$\partial (G,T)\to \partial^r (G,T)$
are isomorphic to the metric space $(\bbZ/10\bbZ,d_{\{\pm1\}})$.
See \cite{D} for more examples.

\begin{defn}
Let $G$ be a finitely generated abelian group and $S\subset G$ a finite symmetric generating set.
A nonempty subset $F\subset S$ is called a {\bf face} of $S$
if the following property holds:
for every $|S|$-tuple and $|F|$-tuple of non-negative integers $(\alpha_s)_{s\in S}$,
and $(\beta_f)_{f\in F}$,
satisfying
\[ \sum_{s\in S} \alpha_s = \sum_{f\in F} \beta_f \quad \mbox{and} \quad \sum_{s\in S} \alpha_s\cdot s = \sum_{f\in F} \beta_f\cdot f\]
we have $\alpha_s=0$ for every $s\notin F$.
\end{defn}

The faces of $T$ in Example~\ref{G=Z} are the singletons $\{-10\}$ and $\{+10\}$. Note that in case of free abelian groups $\bbZ^n$, the faces of the generating set are the faces of the convex hull of the generators embedded in $\mathbb{R}^{n}$ intersected with $S$.

Recall that every $T_0$ finite topological space is nothing but a finite poset, upon setting for points $x$ and $y$,
\[ x\leq y \quad \Leftrightarrow \quad y\in \overline{\{x\}}. \]

\begin{prop} \label{abelian}
Let $G$ be a finitely generated abelian group and $S$ a finite symmetric generating set.
Then the set $\partial^r (G,S)$ is in one-to-one correspondence with the collection of faces of $S$.
In particular, $\partial^r (G,S)$ is a finite set.
Moreover, under this correspondence we have the following.
\begin{enumerate}
\item
For every face $F\subset S$, the corresponding fiber in $\partial (G,S)$ is isometric to the Cayley graph of $(G/\langle F \rangle,S\langle F \rangle)$.
\item
The quotient topology on $\partial^r (G,S)$ is $T_0$ and the correspondence with the set of faces is order preserving,
for the topology ordering on $\partial^r (G,S)$ and inclusion of faces.
\item
The simplicial complex of flags in the poset $\partial^r (G,S)$ is homemorphic to a sphere. Its dimension equals the rank of $G$ minus one.
\end{enumerate}
\end{prop}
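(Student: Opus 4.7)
The strategy is to identify boundary horofunctions with directions of escape in $V = G \otimes_\bbZ \bbR$, and to match these directions with faces of $S$ through the polytope $P = \conv(S) \subset V$.

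I would first reduce to the torsion-free case. The torsion subgroup of $G$ is finite, so it has bounded diameter and perturbs distances by $O(1)$; this affects neither $\partial(G,S)$ nor $\partial^r(G,S)$, and a routine check shows that the face structure of $S$ agrees with that of its image in $G/\operatorname{tor}$. So assume $G = \bbZ^n \subset V = \bbR^n$. Then $P = \conv(S)$ is a centrally symmetric polytope containing $0$ in its interior, and its Minkowski gauge $\|\cdot\|_P$ is a polyhedral norm on $V$. I claim $|v|_S = \|v\|_P + O(1)$ for $v \in G$: the gauge is realized by rational convex combinations of elements of $S$, which can be cleared of denominators and rounded to integer non-negative combinations with uniformly bounded loss. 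Under this identification, the combinatorial definition of a face $F \subset S$ translates to the assertion that $\conv(F)$ is a proper face of $P$ and $F = S \cap \conv(F)$, yielding a bijection between proper faces of $S$ and proper faces of $P$.

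Next, I would classify boundary horofunctions by the face through which the defining sequence escapes. Given $x_n \to \infty$ in $G$, pass to a subsequence so that $x_n / \|x_n\|_P$ converges to some $p$ lying in the relative interior of a unique proper face $F'$ of $P$, hence to a face $F = S \cap F'$ of $S$. Using the dual formula $\|v\|_P = \max_\ell \langle \ell, v\rangle$ with $\ell$ ranging over vertices of the polar polytope $P^\circ$ (and tracking the $O(1)$ rounding correction), one computes $\lim_n (d_S(x_n, y) - d_S(x_n, e))$ explicitly: the limit depends on the sequence only through $F$ and through the asymptotic coset of $x_n$ in $G/\langle F \rangle$, so its class in $\partial^r(G,S)$ depends only on $F$. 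Horofunctions associated to different faces differ by an unbounded function, since their supporting linear functionals span different affine subspaces; this yields the main bijection. For (1), varying the transverse offset of $x_n$ by an element $a \in G$ changes the horofunction by a bounded function whose sup-norm is the word distance of $a + \langle F\rangle$ in $(G/\langle F\rangle, S\langle F\rangle)$, exhibiting the fiber as this Cayley graph. For (2), the closure relations in $\partial^r(G,S)$ translate into the face-inclusion order via the polytope picture: a sequence within a given fiber can escape to an adjacent fiber exactly when the corresponding faces are related by inclusion. For (3), the flag complex of the poset of proper faces of an $n$-polytope is classically homeomorphic to the barycentric subdivision of $\partial P$, hence to $\mathbb{S}^{n-1}$ where $n = \operatorname{rank}(G)$.

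The main obstacle, in my view, is the lattice-to-real comparison together with the face-sensitive horofunction computation: one must carry out the rounding with enough precision that the face a sequence escapes through is unambiguous, and then extract the additive-error-free dependence of the horofunction on the transverse coset. Once this is in hand, the remaining parts follow from standard polytope combinatorics.
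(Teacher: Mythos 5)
Your overall route---replacing the word metric by the Minkowski gauge of $P=\conv(S)$ up to $O(1)$, matching combinatorial faces of $S$ with proper faces of $P$, and deducing part (3) from the barycentric subdivision of $\partial P$---is a legitimate alternative to the paper's argument, which instead works combinatorially with geodesic rays (every geodesic ray uses all but boundedly many letters from a single face). However, the step where you classify horofunctions fails as stated. You assign to a sequence $x_n\to\infty$ the face $F'$ whose relative interior contains $\lim x_n/\|x_n\|_P$ and claim the class of the limiting horofunction in $\partial^r(G,S)$ depends only on this face. Take $G=\bbZ^2$, $S=\{\pm e_1,\pm e_2\}$, and compare $x_n=ne_1$ with $x'_n=ne_1+\lfloor\sqrt{n}\rfloor e_2$. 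Both have radial limit $e_1$, the relative interior of a vertex of the diamond $P$, so both receive the face $\{e_1\}$; but the limiting horofunctions are $y\mapsto -y_1+|y_2|$ and $y\mapsto -y_1-y_2$ respectively, whose difference $|y_2|+y_2$ is unbounded, and the second sequence in fact belongs to the edge face $\{e_1,e_2\}$. So the relevant face cannot be read off from the radial limit: sublinear but unbounded transverse drift changes the class. (Relatedly, the ``asymptotic coset of $x_n$ in $G/\langle F\rangle$'' you invoke does not exist for $x'_n$, whose image in $G/\langle e_1\rangle$ diverges.) The correct invariant is finer---for instance the minimal face $F$ such that $x_n$ stays within bounded $d_S$-distance of a coset of $\langle F\rangle$, or, in the paper's language, the minimal face containing all letters used infinitely often along a geodesic ray through the $x_n$---and with that replacement your computation can be repaired.

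A second, smaller issue: your reduction to the torsion-free case asserts that killing the torsion ``affects neither $\partial(G,S)$ nor $\partial^r(G,S)$.'' Bounded perturbations of the metric do preserve $\partial^r$, but not $\partial$: the paper's own example with $T=\{\pm1,\pm10\}$ in $\bbZ$ shows the unreduced horoboundary is sensitive to exactly this kind of change. Since part (1) describes fibers inside $\partial(G,S)$ as Cayley graphs of $G/\langle F\rangle$ \emph{with the torsion included}, you cannot discard the torsion before proving (1); the paper passes to the torsion-free quotient only for the face combinatorics and part (3), keeping the fiber computation in $G$ itself.
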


\begin{proof}
First we remark that points along a geodesic ray always converge to a horofunction (see \cite[Section 1.2]{G} or \cite[Theorem 4.7]{R}).

The definition of the face implies that there exists $M>0$ such that for any geodesic $r$ there is a face $F\subseteq S$ such that number of letters in $r$  which are not from $F$ is bounded by $M$, in other words, up to finite translation, all the geodesics rays are given by using infinitely many letters from some face of $S$ and finitely many from others.  
Because the group is abelian, any two geodesic rays that use infinitely many letters from the same face and finitely many from other faces are equivalent, in the sense that they converge to the same limit point in the reduced horoboundary. Therefore, there are finitely many (up to this equivalence) geodesic rays, and any unbounded sequence of elements in $G$ lies, up to subsequence, on such a geodesic ray.

Given a geodesic ray $r$ one can find a minimal face (with respect to inclusion) which contains all the letters which appear infinitely often in $r$. Conversely, given a face $F\subseteq S$ one can build a geodesic ray using only letters from $F$ and using each one of them infinitely many times. This defines a bijection between the faces of $S$ and geodesic rays converging to distinct points on the reduced horoboundary. To see that the latter is true, let $r_1(t), r_2(t)$ be the geodesic rays corresponding to two different faces $F_1,F_2$, $\phi_1,\phi_2$
the limiting horofunctions, normalized such that $\phi_1(0)=\phi_2(0)=0$, where we write $0$ for the identity element of the group. Without loss of generality there exists $a\in F_1 \setminus F_2$. Clearly, $\phi_1(-na)=n$ for all $n\in \bbN$. To prove that $\phi_1 - \phi_2 \notin C_b(G)$, we argue that for any $C\geq 0$ we have $\phi_2(-na) \leq n-C$ for large enough values of $n$. This is easily verified after projecting to the torsion free part of $G$. For the same reason, if $F_1\not\subset F_2$, we have $[\phi_2] \notin \overline{ \{ [\phi_1] \}}$, implying that the quotient topology is $T_0$. 

The fibers of a point in the reduced horoboundary corresponding to a face $F$ are all translations of a geodesic ray, which uses each element in $F$ infinitely many times. These are exactly the elements of $G/\langle F \rangle$.

To see that the correspondence is order preserving, suppose $F_1 \subset F_2$. Let $\phi_1, \phi_2$ be the corresponding horofunctions, where $\phi_1$ is obtained as a limit along the sequence $n\sum_{f\in F_1} f$ and $\phi_2$ as a limit along the sequence $n\sum_{f\in F_2} f$ as $n\to \infty$. We will show that there exists a sequence of horofunctions $\psi_j\in \partial(G,S)$ such that $\psi_j-\phi_1 \in C_b(G)$ for all $j$ and $\psi_j\to \phi_2$ as $j\to \infty$. Indeed, one can take $\psi_j$ as a limit along $j\sum_{f\in F_2\setminus F_1}f + n\sum_{f\in F_1}f$ as $n\to \infty$. The above properties are clearly satisfied, and hence $[\phi_2] \in \overline{\{ [\phi_1] \}}$.

Let $G=\mathbb{Z}^n\times T$, where $T$ is the torsion part. We will show that the projection $\pi : G \to \mathbb{Z}^n$ defines an order preserving bijection of the faces, with respect to generating sets $S$ and $\pi(S)$. 

If $F \subset S$ is a face, suppose we have  $\sum_F \alpha_f \pi(f) = \sum_S \alpha_s \pi(s)$ and $\sum_F \alpha_f = \sum_S \alpha_s$. Write preimages of the first equation. If there is no equality in the preimage, then the difference between the sides is in the torsion part, hence by multiplying all the coefficients, one will obtain equality in the preimage. Hence, we can assume that these equalities hold in the preimage of $\pi$, thus $\alpha_s=0$ for $s\notin F$. We need to show that $s \notin F$ implies $\pi(s) \notin \pi(F)$(this will also imply injectivity). Suppose not, then for some $f\in F$, $f=s+t$ where $t$ is the torsion part. Then for some $\alpha \neq 0$, $\alpha t=0$, hence $\alpha s = \alpha f$, but since $s\notin F$, by definition of face, we must have $\alpha=0$, contradiction. Thus, $\pi$ maps faces to faces. Clearly, $\pi$ preserves inclusion. 

To see that the topology is $T_0$, note that we already showed that the closure of points corresponding to a face contains all maximal faces in which this face is contained. We are left to show that a singleton corresponding to a maximal face is closed. This would describe all closures of points, which will be different for different points.

To show surjectivity of $\pi$ let $\bar{F}\subset \pi(S)$ be a face, i.e. for any combination $\sum_{\bar{F}} \alpha_f \pi(f) = \sum_{\pi(S)} \alpha_s \pi(s)$ such that $\sum_{\bar{F}} \alpha_f = \sum_{\pi(S)} \alpha_s$, we have $\alpha_s=0$ for $\pi(s)\notin \bar{F}$. Let $F=\pi^{-1}(\bar{F})$. Need to show that $F\subset S$ is a face. For any combination $\sum_F \alpha_f f = \sum_{S} \alpha_s (s)$ such that $\sum_F \alpha_f = \sum_S \alpha_s$ the same holds after applying $\pi$, therefore, $\alpha_s=0$ for $\pi(s)\notin \bar{F}$. As we have already seen, that $s\notin F$ implies $\pi(s)\notin \pi(F)$, then $\alpha_s=0$ for $s \notin F$, and therefore the preimage of $\bar{F}$ is a face. 

Hence, the simplicial complex of flags in the poset $\partial^r (G,S)$ is homemorphic to one obtained from $\partial^r (\pi(G),\pi(S))$, where $\pi$ is the map to the torsion free component. For free abelian group $\mathbb{Z}^n$, the faces in our sense coincide with faces of convex hull of the generators embedded in $\mathbb{R}^n$, and the corespondence  preserves the order, hence the flag complex is homeomorphic to $(n-1)$-sphere, where $n$ is the rank of the group.

\end{proof}

\section{An Isoperimetric Inequality for $\mathbb{Z}^{2}$}\label{sec4}

In this section we consider an elementary geometric problem,
a discrete planar isoperimetric inequality,
which might be of an independent interest of the rest of the paper. We start by defining notions needed to state the discrete isoperimetric inequality. 

Fix a finite collection of vectors $V\subset \bbR^2$. Assume that $V=-V$.
A $V$-\textbf{polygon} (or simply, a polygon when $V$ is clear) is
a word in the kernel of the natural map $F_V\to \bbR^2$ where $F_V$ is the free group generated by $V$.
Put in another way, it is a word in $V$ which represents the trivial element in $\bbR^2$. 
We denote by $\mathcal{P}(V)$ (or $\mathcal{P}$ when $V$ is clear) the collection of all $V$-polygons. 

For a polygon $P=(u_1\ldots u_n)$, $u_i\in V$, (and $\sum u_i=0$),  set $l(P)=n$ and $a(P)=\half\sum_{i<j} \det(u_i,u_j)$. The geometric realization of $P$ is the polygon in $\bbR^2$ obtained by concatenating the vectors $u_i$ in this order.
The quantities $l(P)$ and $a(P)$ are the (combinatorial) \textbf{perimeter} and the signed (Euclidean) \textbf{area}, respectively, of the geometric realization of $P$. Indeed, for fixed $1\leq j \leq n$, the signed area of $\Delta_j$ in Figure \ref{fig:geom-real} is given by $\frac{1}{2}\sum _{i< j}\det( u_{i} ,u_{j})$. For the area of $P$, we sum over $j$, i.e.  $a(P)=\sum_j a(\Delta_j)$.

\begin{figure}[t]\label{fig:geom-real}
 
\tikzset{
pattern size/.store in=\mcSize, 
pattern size = 5pt,
pattern thickness/.store in=\mcThickness, 
pattern thickness = 0.3pt,
pattern radius/.store in=\mcRadius, 
pattern radius = 1pt}
\makeatletter
\pgfutil@ifundefined{pgf@pattern@name@_9ppp0o5ch}{
\makeatletter
\pgfdeclarepatternformonly[\mcRadius,\mcThickness,\mcSize]{_9ppp0o5ch}
{\pgfpoint{-0.5*\mcSize}{-0.5*\mcSize}}
{\pgfpoint{0.5*\mcSize}{0.5*\mcSize}}
{\pgfpoint{\mcSize}{\mcSize}}
{
\pgfsetcolor{\tikz@pattern@color}
\pgfsetlinewidth{\mcThickness}
\pgfpathcircle\pgfpointorigin{\mcRadius}
\pgfusepath{stroke}
}}
\makeatother

 
\tikzset{
pattern size/.store in=\mcSize, 
pattern size = 5pt,
pattern thickness/.store in=\mcThickness, 
pattern thickness = 0.3pt,
pattern radius/.store in=\mcRadius, 
pattern radius = 1pt}
\makeatletter
\pgfutil@ifundefined{pgf@pattern@name@_0o45n4tmv}{
\makeatletter
\pgfdeclarepatternformonly[\mcRadius,\mcThickness,\mcSize]{_0o45n4tmv}
{\pgfpoint{-0.5*\mcSize}{-0.5*\mcSize}}
{\pgfpoint{0.5*\mcSize}{0.5*\mcSize}}
{\pgfpoint{\mcSize}{\mcSize}}
{
\pgfsetcolor{\tikz@pattern@color}
\pgfsetlinewidth{\mcThickness}
\pgfpathcircle\pgfpointorigin{\mcRadius}
\pgfusepath{stroke}
}}
\makeatother
\tikzset{every picture/.style={line width=0.75pt}} 
\usetikzlibrary{patterns}

\begin{tikzpicture}[x=0.75pt,y=0.75pt,yscale=-.7,xscale=.7]

\draw   (369.29,70.79) -- (422.73,102.42) -- (396.11,153.01) -- (309.49,184.47) -- (228.09,173.1) -- (213.2,127.47) -- (276.04,81.94) -- cycle ;
\draw [pattern=_9ppp0o5ch,pattern size=6pt,pattern thickness=0.75pt,pattern radius=0.75pt, pattern color={rgb, 255:red, 0; green, 0; blue, 0}] [dash pattern={on 0.84pt off 2.51pt}]  (228.09,173.1) -- (367.67,71.96) ;
\draw [shift={(369.29,70.79)}, rotate = 504.07] [fill={rgb, 255:red, 0; green, 0; blue, 0 }  ][line width=0.75]  [draw opacity=0] (8.93,-4.29) -- (0,0) -- (8.93,4.29) -- cycle    ;

\draw [pattern=_0o45n4tmv,pattern size=6pt,pattern thickness=0.75pt,pattern radius=0.75pt, pattern color={rgb, 255:red, 0; green, 0; blue, 0}] [dash pattern={on 0.84pt off 2.51pt}]  (228.09,173.1) -- (276.04,81.94) ;

\draw    (286,80.67) -- (278.02,81.68) ;
\draw [shift={(276.04,81.94)}, rotate = 352.74] [color={rgb, 255:red, 0; green, 0; blue, 0 }  ][line width=0.75]    (10.93,-3.29) .. controls (6.95,-1.4) and (3.31,-0.3) .. (0,0) .. controls (3.31,0.3) and (6.95,1.4) .. (10.93,3.29)   ;

\draw (219,186) node  [align=left] {$\displaystyle 0$};
\draw (311,204) node  [align=left] {$\displaystyle u_{1}$};
\draw (405,171) node  [align=left] {$\displaystyle u_{1} +u_{2}$};
\draw (389,47) node  [align=left] {$\displaystyle \sum _{i< j} u_{i}$};
\draw (287,103) node  [align=left] {$\displaystyle \Delta_j $};
\draw (321,59) node  [align=left] {$\displaystyle u_{j}$};

\end{tikzpicture}
\caption{Geometric realization of $\displaystyle P.$}
\end{figure}
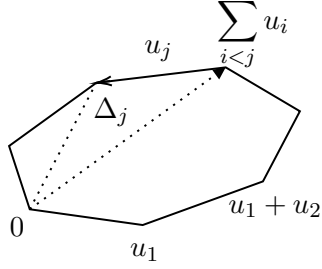

We also set
\[ \gamma(P):=a(P)/l(P)^2 \quad \mbox{and} \quad \gamma_V:=\sup \{\gamma(P)~|~P\in \mathcal{P}(V)\}. \]
The constant $\gamma_V$ is called the \textbf{isoperimetric constant} of $V$.

We define special families of polygons which will be useful in the proofs. 
A polygon of the form
$P=(u_1,\ldots,u_n,-u_1,\ldots,-u_n)$ is said to be   \textbf{symmetric}. If $P$ is a symmetric polygon, denote by 
$$\half  P:=(u_1,\ldots, u_n, -u_1-u_2-\ldots -u_n).$$
Note that for symmetric polygon $P$ of length $2n$ the area is
\begin{equation}\label{eq:areasymmetric}
    a(P)= 2a\left(\half P\right) = \sum_{i<j\leq n} \det(u_i,u_j)
\end{equation}  
We endow the set $V$ with the order $\prec$ induced from the order on the arguments of vectors in $\bbR^2$, where the arguments are seen as an interval $[0,2\pi)$.  A symmetric polygon $P=(u_1,\ldots,u_n, -u_1, \ldots, -u_n)$ is said to be \textbf{ordered} if up to cyclic permutation for all $i\leq j\leq n$ we have $u_i \prec u_j$. A polygon $P$ is ordered if and only if the geometric realization of $\half P$ is convex and the signed area $a(P)$ is non-negative. Note that in our definition the geometric realization of an ordered polygon $P$ is not necessarily convex itself, as the angle between $u_n$ and $-u_1$ can be larger than $\pi$.
We denote the set of all symmetric ordered polygons by $\mathcal{P}_{so}$.
 
 \begin{figure}\label{fig:symorder}
    
\tikzset{every picture/.style={line width=0.75pt}} 

\begin{tikzpicture}[x=0.75pt,y=0.75pt,yscale=-.7,xscale=.7]

\draw   (578,143) -- (540.12,196.03) -- (480.5,218) -- (434.06,196.03) -- (428,143) -- (465.88,89.97) -- (525.5,68) -- (571.94,89.97) -- cycle ;
\draw    (434.06,196.03) -- (571.94,89.97) ;

\draw   (172.5,116) -- (209.5,50) -- (255.94,71.97) -- (262,125) -- (201.5,134) -- (164.5,200) -- (118.06,178.03) -- (112,125) -- cycle ;

\draw (447,216) node  [align=left] {$\displaystyle u_{1}$};
\draw (557,69) node  [align=left] {\mbox{-}$\displaystyle u_{1}$};
\draw (590,114) node  [align=left] {$\displaystyle u_{n}$};
\draw (415,169) node  [align=left] {\mbox{-}$\displaystyle u_{n}$};
\draw (519,173) node  [align=left] {$\displaystyle \frac{1}{2} P$};
\draw (517,219) node  [align=left] {$\displaystyle u_{2}$};
\draw (487,69) node  [align=left] {\mbox{-}$\displaystyle u_{2}$};
\draw (134,204) node  [align=left] {$\displaystyle u_{1}$};
\draw (98,157) node  [align=left] {\mbox{-}$\displaystyle u_{n}$};
\draw (282,94) node  [align=left] {$\displaystyle u_{n}$};
\draw (250,55) node  [align=left] {\mbox{-}$\displaystyle u_{1}$};
\draw (240,140) node  [align=left] {$\displaystyle u_{2}$};
\draw (136,105) node  [align=left] {\mbox{-}$\displaystyle u_{2}$};

\end{tikzpicture}

     \caption{Symmetric unordered (left) and ordered (right) polygons.}
     
 \end{figure}
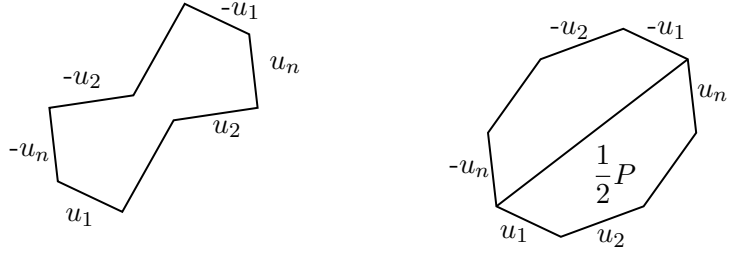

Figure \ref{fig:symorder} suggests that given edges from a set $V$, the best isoperimetric ratio might be achieved by symmetric ordered polygons. This will be confirmed in Theorem \ref{iso thm} below.

Lastly, we introduce rescaling of polygons. For $P=(u_1, u_2, \dots u_{n-1},u_n)$ we set
$ 2P=(u_1,u_1,u_2,u_2,\ldots,u_{n-1},u_{n-1},u_n,u_n)$. 
Similarly we define the polygon $kP$ for every $k\in\bbN$. We will write $ku_i$ and $-ku_i$ in the sequence for $k$ consecutive appearances of $u_i$ and, repsectively, $-u_i$. Rescaling preserves the set $\mathcal{P}_{so}$.
Observe that $l(kP)=kl(P)$ and $a(kP)=k^2a(P)$.
In particular $\gamma(kP)=\gamma(P)$.

\begin{theorem} \label{iso thm}
Given a finite collection of vectors $V\subset \bbZ^d$ with $V=-V$,
there exists $P\in \mathcal{P}_{so}(V)$ such that $\gamma_V=\gamma(P)$. 
\end{theorem}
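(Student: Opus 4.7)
The plan is to reduce an arbitrary polygon to a symmetric ordered one without decreasing $\gamma$, and then to establish attainment of the supremum over $\mathcal{P}_{so}(V)$ via a compactness-plus-Lagrange argument on the finite-dimensional space of multiplicity vectors. The main geometric ingredient is the planar Brunn-Minkowski inequality applied to $P+(-P)$.

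For the reduction, given any $P=(u_1,\ldots,u_n)\in\mathcal{P}(V)$, I would first reorder its edges cyclically by argument to obtain a convex polygon $P^{\circ}$ with the same composition. Then $l(P^{\circ})=l(P)$, and $a(P^{\circ})\geq a(P)$ by the classical fact that, among all polygons with a prescribed multiset of edge vectors summing to zero, the cyclically sorted arrangement maximizes the signed area. Next form the Minkowski sum $R=P^{\circ}+(-P^{\circ})$. This is a centrally symmetric convex polygon whose boundary, traversed counterclockwise, is a symmetric ordered polygon $Q\in\mathcal{P}_{so}(V)$: the edge multiset of $R$ is $\{u_i,-u_i\}_{i=1}^{n}$ arranged in cyclic angular order, so $l(Q)=2\,l(P)$ and $a(Q)=\operatorname{area}(R)$. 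Applying the planar Brunn-Minkowski inequality to the convex regions $P^{\circ}$ and $-P^{\circ}$ gives $\operatorname{area}(R)\geq 4\,a(P^{\circ})$, hence
\[
\gamma(Q)=\frac{a(Q)}{l(Q)^{2}}\geq\frac{4\,a(P^{\circ})}{4\,l(P)^{2}}\geq\gamma(P).
\]
Therefore $\gamma_V=\sup_{Q\in\mathcal{P}_{so}(V)}\gamma(Q)$.

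For attainment, observe that a symmetric ordered polygon is determined, up to cyclic shift, by a directed line $\ell$ through the origin disjoint from $V$ together with a tuple of nonnegative integer multiplicities $m=(m_v)_{v\in V^{+}}$, where $V^{+}=\{v_1\prec\cdots\prec v_k\}$ lists the vectors of $V$ on one side of $\ell$. Since $V$ is finite, only finitely many combinatorially distinct choices of $\ell$ arise, so it suffices to fix one and show the induced sup is a max. With $\ell$ fixed, equation \eqref{eq:areasymmetric} gives
\[
\gamma(Q)=\frac{\sum_{i<j}m_{v_i}m_{v_j}\det(v_i,v_j)}{4\bigl(\sum_{i}m_{v_i}\bigr)^{\!2}},
\]
a ratio of two rational-coefficient quadratic forms in $m$, homogeneous of degree zero. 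Normalizing $\sum_{i}m_{v_i}=1$ reduces the problem to maximizing a rational-coefficient quadratic form $A$ on the rational simplex $\Sigma\subset\bbR^{k}$, whose maximum is attained at some $m^{*}\in\Sigma$ by continuity and compactness. A Lagrange multiplier analysis on the face of $\Sigma$ containing $m^{*}$ yields a linear system in $m^{*}$ with rational coefficients, so $m^{*}$ may be chosen with rational entries; clearing denominators produces integer multiplicities defining a genuine $Q^{*}\in\mathcal{P}_{so}(V)$ realizing the max. Optimizing over the finitely many choices of $\ell$ then gives the desired polygon with $\gamma(Q)=\gamma_V$. The main obstacle is precisely this attainment step: a priori the supremum of a degree-zero rational function over integer multiplicity vectors could fail to be realized, and what closes the gap is the observation that the constrained extremum of $A$ on $\Sigma$ lies at a rational point, which Lagrange multipliers make explicit.
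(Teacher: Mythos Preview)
Your argument is correct, and the attainment step is essentially the paper's own: both reduce to maximizing a rational quadratic form on the standard simplex and extract a rational maximizer by a Lagrange-multiplier analysis (the paper packages this as a separate lemma handling the degenerate cases of the Hessian, which your sketch leaves implicit but which is standard).

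The reduction step, however, is genuinely different. The paper does not pass through Brunn--Minkowski. Instead, given an even-length polygon $P=(u_1,\ldots,u_{2n})$ it forms two symmetric polygons $P_+=(u_1,\ldots,u_n,-u_1,\ldots,-u_n)$ and $P_-=(-u_{n+1},\ldots,-u_{2n},u_{n+1},\ldots,u_{2n})$, observes directly that $l(P_\pm)=l(P)$ and $a(P)=\tfrac12(a(P_+)+a(P_-))$, and concludes $\gamma(P)\le\max\{\gamma(P_+),\gamma(P_-)\}$; a subsequent sort of the first half yields a symmetric ordered polygon without decreasing area. Your route---sort first, then symmetrize via the Minkowski sum $P^\circ+(-P^\circ)$ and invoke Brunn--Minkowski---is more conceptual and immediately lands on a centrally symmetric \emph{convex} polygon, which is why your half-plane parametrization by a line~$\ell$ suffices (note that the paper's notion of symmetric ordered is slightly broader and does not force the first half into a half-plane, so your description of $\mathcal P_{so}$ is really a description of the convex members of it; this is harmless for your argument). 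The paper's split is more elementary and self-contained, avoiding any appeal to convex-geometric inequalities; your approach trades that for a shorter and more structural symmetrization. Amusingly, the paper does invoke Brunn--Minkowski, but only in a remark after the proof, to note uniqueness of the optimizer up to homothety.
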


\begin{lemma} \label{iso lemma}
Let $K<\bbR$ be a subfield, and $d\in \bbN$.
Let $Q\in M_{r\times r}(K)$ be a symmetric matrix with positive coefficients,
and denote by $q$ the corresponding quadratic form.
Let
\[ \Delta=\{x\in \bbR^r~|~\forall i,~ x_i\geq 0,~\sum x_i=1~\}, \]
and $\Delta_q\subset \Delta$ the maximum set of $q$.
Then there exists a finite collection $K$-rational subspaces $V_1,\ldots,V_n<\bbR^r$
such that $\Delta_q=\cup_i (\Delta\cap V_i)$.
\end{lemma}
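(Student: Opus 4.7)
The plan is to present $\Delta_q$ as a union, indexed by the possible supports of maximizers, of intersections of $\Delta$ with $K$-rational linear subspaces cut out by Lagrange-multiplier equations. Setting $M := \max_{x \in \Delta} q(x)$, for each nonempty $T \subseteq \{1, \dots, r\}$ I would define
\[ V_T := \Bigl\{x \in \bbR^r :\; x_j = 0 \text{ for } j \notin T,\ (Qx)_i = M \textstyle\sum_{k} x_k \text{ for every } i \in T\Bigr\}. \]
Once I establish $M \in K$, and since $Q$ has entries in $K$, each $V_T$ is a $K$-rational linear subspace, and the goal reduces to proving $\Delta_q = \bigcup_T (V_T \cap \Delta)$.

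The inclusion $V_T \cap \Delta \subseteq \Delta_q$ is immediate: for $x \in V_T \cap \Delta$ one has $\sum_k x_k = 1$, hence $(Qx)_i = M$ for every $i \in T$, and so $q(x) = \sum_i x_i (Qx)_i = M$. For the reverse inclusion, I would apply the KKT conditions at a maximizer $x$ with $T := \supp(x)$: there exist $\lambda \in \bbR$ and $\mu_j \geq 0$ with $\mu_j x_j = 0$ and $2(Qx)_j = \lambda - \mu_j$. This forces $(Qx)_i = \lambda/2$ for every $i \in T$, and evaluating $q(x) = \sum_i x_i (Qx)_i$ immediately gives $\lambda/2 = M$, which places $x$ in $V_T$.

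The heart of the matter is showing $M \in K$. At a maximizer $x$ with support $T$, the Lagrange system $(Qx)_i = c$ for $i \in T$ together with $\sum_{i \in T} x_i = 1$ is linear with coefficients in $K$, and at any of its solutions the same calculation yields $q(x) = c$. Using the standard fact that a quadratic form is constant on its critical set inside any affine subspace, the value $c$ is an invariant of the face $T$; it coincides with $q$ evaluated at any $K$-rational point of the (nonempty, $K$-definable) solution set. Taking the maximum over the finitely many subsets $T \subseteq \{1, \dots, r\}$ for which the system admits a solution in the relative interior of $\Delta_T$, we obtain $M$ as the maximum of a finite list of $K$-rational numbers, hence $M \in K$. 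This is the only delicate step; the rest of the argument is a direct application of the KKT conditions, and it produces the required family $V_1, \dots, V_n$ as the $V_T$'s.
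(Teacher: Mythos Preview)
Your argument is correct, and it follows a genuinely different route from the paper's.

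The paper proceeds by induction on $r$, using a case analysis on $\Ker(Q)$. Via the Lagrange condition, an interior maximizer must satisfy $Qx=\lambda\mathbf{1}$; the three cases are (i) $Q$ invertible, where an interior maximizer is forced to lie on the $K$-rational line through $Q^{-1}\mathbf{1}$, (ii) $\Ker(Q)\not\perp\mathbf{1}$, which rules out interior maximizers and pushes everything to $\partial\Delta$, and (iii) $\Ker(Q)\perp\mathbf{1}$, where $\Delta_q=\Delta\cap(\Ker(Q)+(\partial\Delta)_q)$. In every case the boundary contribution is handled by the inductive hypothesis on the proper faces. The paper never computes or even names the maximal value $M$.

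Your approach is direct rather than inductive: you run the KKT conditions separately on each face to identify the critical equations, and the whole weight of the argument shifts to showing $M\in K$. Your justification for this---that the Lagrange system on a face is $K$-linear and that $q$ takes a single value on its solution set---is valid; the constancy follows, for instance, from the elementary fact that a quadratic polynomial restricted to the line through two critical points must be affine, hence constant. Once $M\in K$, each $V_T$ is visibly $K$-rational, and the two inclusions you wrote down give $\Delta_q=\bigcup_T(V_T\cap\Delta)$.

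Two small remarks. First, you don't actually need to maximize over all admissible $T$: any single maximizer has some support $T_0$ and is already an interior critical point on that face, so $M=c_{T_0}\in K$ immediately. Second, your argument nowhere uses the positivity of the entries of $Q$, whereas the paper invokes it (implicitly) to exclude $\lambda=0$; so your version is slightly more general. The trade-off is that the paper's inductive description carries a bit more structural information about how $\Delta_q$ sits relative to $\Ker(Q)$ and the face lattice, while your description via the $V_T$'s is cleaner to state and quicker to obtain.
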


\begin{proof}[Proof of the lemma]
Denote the boundary of $\Delta$ in its affine span by $\partial \Delta$ and let $(\partial\Delta)_q=\partial \Delta\cap \Delta_q$.
Denote ${\bf 1}=(1,1,\ldots,1)$.
Observe that by Lagrange multiplier theorem,
for $x\in \Delta_q-(\partial\Delta)_q$ there exists some $\lambda\neq 0$ such that $Q(x)=\lambda {\bf 1}$
($\lambda=0$ can occur only for $Q=0$, then the lemma is trivial, so we assume this is not the case).
We consider three cases.

1) $Q$ is invertible. In that case, if there exists $x\in \Delta_q-(\partial\Delta)_q$
then $x$ is the unique solution of $Q(x)=\lambda {\bf 1}$ in the affine span of $\Delta$,
thus $\Delta_q=\{x\}$ and $x$ spans the same line as $Q^{-1}({\bf 1})$, which is $K$-rational.
Otherwise, $\Delta_q\subset \partial \Delta$ and the lemma follows by induction on $d$.

2) There exists $z\in \Ker(Q)$ with $\langle z,{\bf 1}\rangle\neq 0$. Then $\Delta_q \subset \partial\Delta$ and the lemma follows by induction on $r$.
Indeed,
if there exists $x\in \Delta_q-(\partial\Delta)_q$
then $Q(x)=\lambda {\bf 1}$ for some $\lambda\neq 0$, thus we get a contradiction
$0\neq \langle Q(x),z \rangle = \langle x,Q(z)\rangle =0$.

3) $Q$ is not invertible and $\Ker(Q) \perp {\bf 1}$.
Then $\Delta_q=\Delta \cap (\Ker(Q)+(\partial\Delta)_q)$, and the lemma follows again by an induction on $r$.
\end{proof}

\begin{proof}[Proof of the theorem]

Throughout we fix the set $V$ and assume, as we may, that $0\notin V$.
We set $\gamma=\gamma_V$.
First, we show that it is enough to consider the supremum over symmetric ordered polygons.

Since $\gamma$ is invariant under rescaling of polygons, it is enough to consider supremum only over the polygons of even perimeter.  
Let $P_o=(u_1,\ldots,u_{2n})$ be a polygon of even perimeter.
We associate with $P$ two symmetric polygons (see Figure \ref{fig:symmetrizing}):
$$P_+=(u_1,\ldots,u_n,-u_1,\ldots,-u_n) \text{ and }
P_-=(-u_{n+1},\ldots,-u_{2n},u_{n+1},\ldots,u_{2n}).$$

\begin{figure}\label{fig:symmetrizing}

\tikzset{every picture/.style={line width=0.75pt}} 

\begin{tikzpicture}[x=0.75pt,y=0.75pt,yscale=-.8,xscale=.8]

\draw    (35.5,162) -- (35.5,162) -- (162.5,162) ;
\draw [shift={(164.5,162)}, rotate = 180] [color={rgb, 255:red, 0; green, 0; blue, 0 }  ][line width=0.75]    (10.93,-3.29) .. controls (6.95,-1.4) and (3.31,-0.3) .. (0,0) .. controls (3.31,0.3) and (6.95,1.4) .. (10.93,3.29)   ;

\draw    (164.5,162) -- (164.5,162) -- (163.52,80) ;
\draw [shift={(163.5,78)}, rotate = 449.32] [color={rgb, 255:red, 0; green, 0; blue, 0 }  ][line width=0.75]    (10.93,-3.29) .. controls (6.95,-1.4) and (3.31,-0.3) .. (0,0) .. controls (3.31,0.3) and (6.95,1.4) .. (10.93,3.29)   ;

\draw    (163.5,78) -- (163.5,78) -- (136.09,57.21) ;
\draw [shift={(134.5,56)}, rotate = 397.18] [color={rgb, 255:red, 0; green, 0; blue, 0 }  ][line width=0.75]    (10.93,-3.29) .. controls (6.95,-1.4) and (3.31,-0.3) .. (0,0) .. controls (3.31,0.3) and (6.95,1.4) .. (10.93,3.29)   ;

\draw    (134.5,56) -- (36.87,160.54) ;
\draw [shift={(35.5,162)}, rotate = 313.03999999999996] [color={rgb, 255:red, 0; green, 0; blue, 0 }  ][line width=0.75]    (10.93,-3.29) .. controls (6.95,-1.4) and (3.31,-0.3) .. (0,0) .. controls (3.31,0.3) and (6.95,1.4) .. (10.93,3.29)   ;

\draw    (237.5,161) -- (237.5,161) -- (364.5,161) ;
\draw [shift={(366.5,161)}, rotate = 180] [color={rgb, 255:red, 0; green, 0; blue, 0 }  ][line width=0.75]    (10.93,-3.29) .. controls (6.95,-1.4) and (3.31,-0.3) .. (0,0) .. controls (3.31,0.3) and (6.95,1.4) .. (10.93,3.29)   ;

\draw    (366.5,161) -- (366.5,161) -- (365.52,79) ;
\draw [shift={(365.5,77)}, rotate = 449.32] [color={rgb, 255:red, 0; green, 0; blue, 0 }  ][line width=0.75]    (10.93,-3.29) .. controls (6.95,-1.4) and (3.31,-0.3) .. (0,0) .. controls (3.31,0.3) and (6.95,1.4) .. (10.93,3.29)   ;

\draw    (574.5,78) -- (574.5,78) -- (547.09,57.21) ;
\draw [shift={(545.5,56)}, rotate = 397.18] [color={rgb, 255:red, 0; green, 0; blue, 0 }  ][line width=0.75]    (10.93,-3.29) .. controls (6.95,-1.4) and (3.31,-0.3) .. (0,0) .. controls (3.31,0.3) and (6.95,1.4) .. (10.93,3.29)   ;

\draw    (545.5,56) -- (447.87,160.54) ;
\draw [shift={(446.5,162)}, rotate = 313.03999999999996] [color={rgb, 255:red, 0; green, 0; blue, 0 }  ][line width=0.75]    (10.93,-3.29) .. controls (6.95,-1.4) and (3.31,-0.3) .. (0,0) .. controls (3.31,0.3) and (6.95,1.4) .. (10.93,3.29)   ;

\draw    (446.5,162) -- (473.91,182.79) ;
\draw [shift={(475.5,184)}, rotate = 217.18] [color={rgb, 255:red, 0; green, 0; blue, 0 }  ][line width=0.75]    (10.93,-3.29) .. controls (6.95,-1.4) and (3.31,-0.3) .. (0,0) .. controls (3.31,0.3) and (6.95,1.4) .. (10.93,3.29)   ;

\draw    (475.5,184) -- (573.13,79.46) ;
\draw [shift={(574.5,78)}, rotate = 493.04] [color={rgb, 255:red, 0; green, 0; blue, 0 }  ][line width=0.75]    (10.93,-3.29) .. controls (6.95,-1.4) and (3.31,-0.3) .. (1,0) .. controls (3.31,0.3) and (6.95,1.4) .. (10.93,3.29)   ;

\draw    (365.5,77) -- (238.5,77) ;
\draw [shift={(236.5,77)}, rotate = 360] [color={rgb, 255:red, 0; green, 0; blue, 0 }  ][line width=0.75]    (10.93,-3.29) .. controls (6.95,-1.4) and (3.31,-0.3) .. (0,0) .. controls (3.31,0.3) and (6.95,1.4) .. (10.93,3.29)   ;

\draw   (236.5,77) -- (237.48,159) ;
\draw [shift={(237.5,161)}, rotate = 269.32] [color={rgb, 255:red, 0; green, 0; blue, 0 }  ][line width=0.75]    (10.93,-3.29) .. controls (6.95,-1.4) and (3.31,-0.3) .. (0,0) .. controls (3.31,0.3) and (6.95,1.4) .. (10.93,3.29)   ;

\draw [dash pattern={on 0.84pt off 2.51pt}]   (163.5,78) -- (35.5,162) ;

\draw [dash pattern={on 0.84pt off 2.51pt}]   (237.5,161) -- (365.5,77) ;

\draw  [dash pattern={on 0.84pt off 2.51pt}]  (574.5,78) -- (446.5,162) ;

\draw (100,176) node  [align=left] {$\displaystyle u_{1}$};
\draw (180,121) node  [align=left] {$\displaystyle u_{2}$};
\draw (162,55) node  [align=left] {$\displaystyle u_{3}$};
\draw (75,91) node  [align=left] {$\displaystyle u_{4}$};
\draw (302,175) node  [align=left] {$\displaystyle u_{1}$};
\draw (391,120) node  [align=left] {$\displaystyle u_{2}$};
\draw (573,55) node  [align=left] {$\displaystyle u_{3}$};
\draw (486,91) node  [align=left] {$\displaystyle u_{4}$};
\draw (221,120) node  [align=left] {\mbox{-}$\displaystyle u_{2}$};
\draw (295,62) node  [align=left] {$\displaystyle -u_{1}$};
\draw (547,141) node  [align=left] {$\displaystyle -u_{4}$};
\draw (441,180) node  [align=left] {$\displaystyle -u_{3}$};
\draw (110,208) node  [align=left] {$\displaystyle P=(u_1,u_2,u_3,u_4)$};
\draw (306,209) node  [align=left] {$\displaystyle P_{+}$};
\draw (521,209) node  [align=left] {$\displaystyle P_{-}$};
\end{tikzpicture}
\caption{Constructing symmetric $P_+$ and $P_-$ from $P$.}
    
\end{figure}
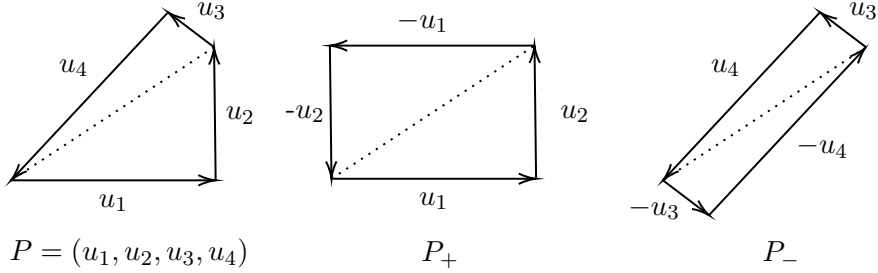

Observe that  $l(P)=l(P_+)=l(P_-)$ and $a(P)=a\left(\half P_-\right)+a\left(\half P_+\right)$ (by the fact that $u_1+\cdots+u_n=u_{n+1}+\cdots+u_{2n}$). Hence,
$\gamma(P)=\frac{1}{2}(\gamma(P_-)+\gamma(P_+))$.

We get for every $P\in \mathcal{P}$,
\[ \gamma(P)=\gamma(2P) \leq \max\{\gamma((2P)_-),\gamma((2P)_+)\}. \]

Clearly, one can associate with any symmetric polygon $P=(u_1 \dots u_{2n})$ a symmetric ordered one $P_o$. This is done by rearranging the vectors $u_i$, for $1\leq i\leq n$, in increasing order (with respect to $\prec$) and by doing a similar rearrangement of $u_i$, for $n+1 \leq i \leq 2n$,  to keep the polygon symmetric. In this case
$l(P_o)=l(P)$ and $a(P_o)\geq a(P)$.
From geometric point of view, this means that a polygon of largest area with given sides is the convex one (recall, that for symmetric polygons, it is enough to consider the area of $\half P$, which is convex when $P$ is ordered). 
In particular, we get that for every symmetric $P\in\mathcal{P}(V)$, $\gamma(P_o)\geq \gamma(P)$.

The area $a(P)$ does not depend on the cyclic permutation of vectors in the polygon, hence we can restrict our attention to
$$\mathcal{P}_{so}^m := \left\{ P=(u_1,\ldots,u_n,-u_1, \ldots,-u_n) \in \mathcal{P}_{so} ~|~ u_1 \prec u_i , \text{ for all }1\leq i\leq n  \right\}$$
Therefore,
\[ \gamma=\sup \{\gamma(P)~|~P\in \mathcal{P}_{so}^m\}.\]

We proceed to show that this supremum is attained. Let 
\[V=\{ v_1, \dots, v_r, -v_1, \dots, -v_r \}.\]

With a symmetric ordered polygon $P\in \mathcal{P}_{so}^m$ we associate $(c_i)\in \bbZ^{2r}_+$, where for each $1\leq i\leq 2r$, $c_i$ counts the number of appearances of $v_i \in V$ in the first half of the edges of $P$ (mind, that the remaining half are just the inverses, hence are completely determined) . Since $(c_i)$ completely determines $P$, the set $\mathcal{P}_{so}^m$ is in a bijection with $\bbZ^{2r}_+$. Note that $a$ is a quadratic form on $\bbR^{2r}$, defined by the symmetric and rational matrix $Q=\left( \det(v_i,v_j) \right)$ (see equation \ref{eq:areasymmetric}). We apply Lemma~\ref{iso lemma}. Since any maximum of $a$ on the simplex $\Delta$ is obtained at a rational vector, we deduce by rescaling that $\gamma$ attains maximum in $\bbZ^{2r}_+$. The bijection described above produces the polygon $P\in P_{so}$ satisfying the theorem.
\end{proof}

\begin{remark}
In fact it is an easy consequence of the 2-dimensional Brunn-Minkowski theorem that the polygon achieving the maximum for $\gamma$ is unique up to a homothety (and cyclic permutation).
\end{remark}

Next, we describe further polygons given by Theorem~\ref{iso thm}, which achieves the maximal isoperimetric constant, by specifying which vectors are used in them. This will be cruicial for studying the norm on the Heisenberg group in Section \ref{sec5}.

\begin{prop} \label{genset}
Let $V\subset \bbZ^2$ be a finite set of vectors, $V=-V$. Let $P_0$ be a symmetric ordered polygon satisfying Theorem~\ref{iso thm}, denote by $V'$ the vectors used in $P_0$, then $V'=ext(conv(V))$.
\end{prop}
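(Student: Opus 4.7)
The plan is to analyze the count vector of $P_0$ from the proof of Theorem~\ref{iso thm} via the Karush--Kuhn--Tucker (KKT) conditions, and to identify $V'$ with the vertex set of an auxiliary convex polygon that will turn out to be $\conv(V)$. Fix a half-plane of representatives $V^+\subset V$ compatible with the first half of $P_0$, and let $c^*_w$ denote the count of $w\in V^+$ in that first half; thus $V' = \mathrm{supp}(c^*)\cup(-\mathrm{supp}(c^*))$. By Theorem~\ref{iso thm} and the scale-invariance of $\gamma$, we may normalize $c^*$ to lie in the simplex $\Delta=\{c\ge 0:\sum c_w=1\}$ as a maximizer of the quadratic form $Q(c)=\sum_{w\prec w'} c_w c_{w'}\det(w,w')$ on $\Delta$.

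The first step is to write down the KKT conditions at $c^*$. Since $\nabla Q(c)=Ac$ with symmetric matrix $A_{ww'}=|\det(w,w')|$, there exists a Lagrange multiplier $\lambda>0$ such that the convex, positively homogeneous, piecewise-linear, even function
\[
f(w):=\sum_{w'\in V^+} c^*_{w'}\,|\det(w,w')|
\]
satisfies $f(w)=\lambda$ for $w\in\mathrm{supp}(c^*)$ and $f(w)\le\lambda$ for the remaining $w\in V^+$; evenness of $f$ extends this bound to all of $V$.

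The second step is to introduce the sublevel set $K:=\{w\in\bbR^2 : f(w)\le\lambda\}$. The properties of $f$ listed above make $K$ a centrally symmetric convex polygon, and $\gamma_V>0$ forces $\mathrm{supp}(c^*)$ to span $\bbR^2$, ensuring $K$ is bounded. The key geometric observation is that $f$ is affine on each wedge cut out by the lines $\{\bbR w':w'\in\mathrm{supp}(c^*)\}$ and has a genuine V-shaped kink across each such line (positivity of the weights $c^*_{w'}$ prevents cancellation), so the vertices of $K$ lie on these rays. Homogeneity of $f$ combined with the KKT equality $f(w')=\lambda$ forces the vertex of $K$ on the ray $\bbR_{>0}w'$ to be $w'$ itself, so the vertex set of $K$ is exactly $V'$.

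Finally, the KKT inequality $V\subseteq K$ yields $\conv(V)\subseteq K$, while $V'\subseteq V$ gives $K=\conv(V')\subseteq\conv(V)$. Therefore $K=\conv(V)$ and $V'=\mathrm{ext}(K)=\mathrm{ext}(\conv(V))$, as required. The main obstacle is the third step -- matching the vertex set of $K$ with the support of $c^*$ -- which requires a careful convex-analytic argument about the piecewise-linear structure of $f$ and its subgradients; the remaining steps are essentially bookkeeping around KKT and central symmetry.
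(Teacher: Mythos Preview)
Your argument is correct and takes a genuinely different route from the paper's. The paper proves the two inclusions by direct local-improvement contradictions: if some $v\in V'$ were not extreme in $\conv(V)$, one rescales $P_0$ and replaces the run of $v$'s by a combination of two extreme vectors $v_1,v_2$ (writing $kv=tv_1+(1-t)v_2$ and clearing denominators) with at most the same combinatorial length but strictly larger enclosed area; conversely, if some extreme $v$ were absent from $V'$, one takes a large multiple $mP_0$ and substitutes $nv$ for a short run $tv_1,(n-t)v_2$ of the two $V'$-vectors adjacent to $v$ in the cyclic order, keeping the perimeter fixed while an explicit determinant computation shows the area strictly increases.

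Your approach instead extracts the first-order optimality (KKT) conditions from the quadratic program set up in the proof of Theorem~\ref{iso thm}, packages them into the even, convex, piecewise-linear, positively homogeneous gauge $f(w)=\sum_{w'}c^*_{w'}\lvert\det(w,w')\rvert$, and identifies the sublevel body $K=\{f\le\lambda\}$ simultaneously with $\conv(V')$ (via the kink/homogeneity analysis forcing each $w'\in\mathrm{supp}(c^*)$ to be a vertex of $K$) and with $\conv(V)$ (via the KKT inequality $V\subseteq K$ together with $V'\subseteq V$). This is more conceptual --- $f$ is, up to a quarter-turn, the support function of the zonotope realized by $P_0$, so $K$ is essentially its polar dual --- and it bypasses the somewhat delicate area bookkeeping in the paper's second inclusion. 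The price is reliance on KKT and a short convex-analysis lemma (your ``third step''), whereas the paper's proof is self-contained and pictorial. One small point worth making explicit: your choice of a half-plane of representatives $V^+$ containing the first half of $P_0$ tacitly assumes that this half does not use both $v$ and $-v$ for any $v$; this holds for an optimal $P_0$ (deleting such a matching pair from the first half leaves the area unchanged while shortening the perimeter), but deserves a sentence.
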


\begin{proof}
First we show that $V'\subset ext(conv(V))$. Recall that $P_0$ is a $V$-polygon maximizing the ratio $\gamma(P_0)$ between the enclosed area and the square of its combinatorial length. If $v\in V',v \notin ext(conv(V))$  we will show that it does not appear in $P_0$. Suppose it does. Then there exists $k\geq 1$ such that $kv \in \partial conv(V)$ and $kv=tv_1 +(1-t)v_2$ for some $v_1,v_2 \in ext(conv(V))$ and $0\leq t \leq 1$. Since $V\subset \bbZ^2$, we have $k,t \in \bbQ$. Write $k=k'/n,t=t'/n$, where $k',t',n\in \bbN$. By assumption, in $k'P_0$ there is an appearance of $k'v$. Let $P_1$ be the polygon obtained from $k'P_0$ by replacing $\pm k'v$ with $\pm t'v_1$ and $\pm(n-t')v_2$ in such a way, so that $P_1$ is a symmetric ordered polygon. 

\begin{figure}\label{fig:increasing_area}

\tikzset{every picture/.style={line width=0.75pt}} 

\begin{tikzpicture}[x=0.75pt,y=0.75pt,yscale=-.7,xscale=.85]

\draw    (121.5,269) -- (221.5,252) ;

\draw    (259.5,124) -- (221.5,252) ;

\draw    (193.5,53) -- (259.5,124) ;

\draw  [dash pattern={on 0.84pt off 2.51pt}]  (236.5,204) -- (267.5,191) ;

\draw  [dash pattern={on 0.84pt off 2.51pt}]  (267.5,191) -- (249.5,163) ;

\draw    (93.5,70) -- (55.5,198) ;

\draw    (93.5,70) -- (193.5,53) ;

\draw    (55.5,198) -- (121.5,269) ;

\draw  [dash pattern={on 0.84pt off 2.51pt}]  (71.5,149) -- (47.5,121) ;

\draw  [dash pattern={on 0.84pt off 2.51pt}]  (53.5,121) -- (81.5,108) ;

\draw    (552.5,151) -- (575.5,166) -- (541.5,254) -- (525.5,261) -- (398.5,215) -- (375.5,200) -- (409.5,112) -- (425.5,105) -- cycle ;

\draw    (475.5,183) -- (550.65,151.77) ;
\draw [shift={(552.5,151)}, rotate = 517.4300000000001] [color={rgb, 255:red, 0; green, 0; blue, 0 }  ][line width=0.75]    (10.93,-3.29) .. controls (6.95,-1.4) and (3.31,-0.3) .. (0,0) .. controls (3.31,0.3) and (6.95,1.4) .. (10.93,3.29)   ;

\draw    (475.5,183) -- (426.58,106.68) ;
\draw [shift={(425.5,105)}, rotate = 417.34000000000003] [color={rgb, 255:red, 0; green, 0; blue, 0 }  ][line width=0.75]    (10.93,-3.29) .. controls (6.95,-1.4) and (3.31,-0.3) .. (0,0) .. controls (3.31,0.3) and (6.95,1.4) .. (10.93,3.29)   ;

\draw    (475.5,183) -- (487.87,145.9) ;
\draw [shift={(488.5,144)}, rotate = 468.43] [color={rgb, 255:red, 0; green, 0; blue, 0 }  ][line width=0.75]    (10.93,-3.29) .. controls (6.95,-1.4) and (3.31,-0.3) .. (0,0) .. controls (3.31,0.3) and (6.95,1.4) .. (10.93,3.29)   ;

\draw (229,176) node [scale=2,rotate=-20.5] [align=left] {\{};
\draw (207,173) node  [align=left] {$\displaystyle k'v$};
\draw (257,214) node  [align=left] {$\displaystyle t'v_{1}$};
\draw (296,163) node  [align=left] {$\displaystyle ( n-t') v_{2}$};
\draw (153,156) node  [align=left] {$\displaystyle k'P_{0}$};
\draw (530,175) node  [align=left] {$\displaystyle v_{1}$};
\draw (431,138) node  [align=left] {$\displaystyle v_{2}$};
\draw (472,151) node  [align=left] {$\displaystyle v$};
\draw (485,208) node  [align=left] {$\displaystyle conv\left(V\right)$};

\end{tikzpicture}
\caption{Replacing $ k'v$ by $t'v_{1}$ and $( n-t') v_{2}$ in $k'P_0$ increases the area.}
    
\end{figure}
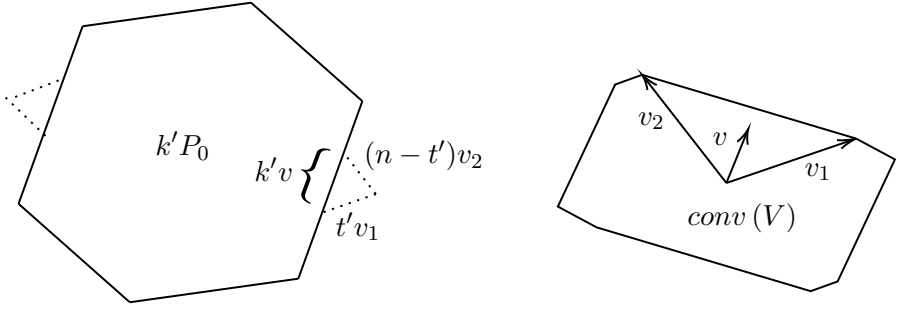

Note that $l(P_1)\leq l(P_0)$, since $k'\geq n$. However, $a(P_1)>a(k'P_0)$ (see Figure \ref{fig:increasing_area}). Hence $\gamma(P_1)>\gamma(k'P_0)=\gamma(P_0)$, contradicting that $\gamma$ attains its maximum at $P_0$. 

For other inclusion, let $v\in ext(conv(V))$ and suppose that $v\notin V'$. We use the order $\prec$ on $V$ defined in the beginning of this section. When restricted to $ext(conv(V))$ this order is a strict total order.

Let $v_1=\max \{ v_i\in V' ~|~ v_1 \prec v \}$ and $v_2=\min \{ v_i \in V' ~|~ v_2 \succ v \}$. 
Similarly to the previous paragraph, there exist integers $k,t,n>0$, such that $k,t<n$ and $tv_1 +(n-t)v_2 = k v$. Let $m>n$ be large (specified later). Construct a symmetric ordered polygon $P_1$ by replacing $\pm tv_1$ and $\pm (n-t)v_2$ in $mP_0$ with $\pm nv$ in the appropriate places. Note that $l(P_0)=l(P_1)$. We are left to argue that $a(P_1)>a(P_0)$, which will conradict that $P_0$ attains the maximum of $\gamma$. 

Up to cyclic permutation we write
$$mP_0=(m_1 v_1, m_2v_2, \dots m_{r'}v_{r'}, -m_1v_1, -m_2v_2, \dots, -m_{r'} v_{r'}),$$
with $m_i \geq m$ for each $i$. Then, 
$$P_1=((m_1-t) v_1, nv, (m_2-n+t)v_2, \dots m_{r'}v_{r'}, -(m_1-t) v_1, -nv, \dots, -m_{r'} v_{r'})).$$ 
We compare the areas:
\[
\begin{array}{ll}
a(P_1)-a(P_0) =  
\\
     \quad \quad  {\displaystyle \sum_{3\leq j \leq {r'}} }(n \det(v,v_j) - t\det(v_1,v_j) - (n-t)\det(v_2,v_j))  \\
     \quad \quad +  n(m_1-t)\det( v_1, v ) + n(m_2-n+t)\det(v,v_2)  \\
     \quad \quad + (m_1-t)(m_2-n+t)\det(v_1,v_2)  - m_1m_2\det(v_1,v_2) .
\end{array}     
\]

 We have $k \det(v,w) = t\det(v_1,w) + (n-t)\det(v_2,w)$ for any $w\in \bbR^2$, because $kv=tv_1+(n-t)v_2$. Also, $n\geq k$ and $\det(v,v_j)\geq 0$ for all $3\leq j\leq r'$, therefore, all the summands inside the sum in the last equation are non-negative. Finally, we claim that the sum of the remaining terms is positive if $m$ and, hence, $m_1,m_2$ are large enough. Indeed, it represents the difference between the areas of the quadrilateral and the triangle in Figure \ref{fig:area_calc}.
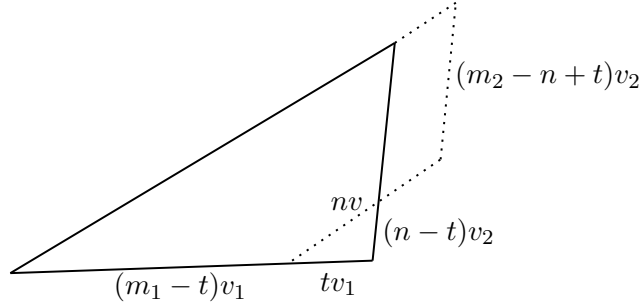
\begin{figure}\label{fig:area_calc}

\tikzset{every picture/.style={line width=0.75pt}} 

\begin{tikzpicture}[x=0.75pt,y=0.75pt,yscale=-.7,xscale=.7]

\draw    (64.5,252) -- (325.5,243) ;

\draw    (341.5,86) -- (325.5,243) ;

\draw  [dash pattern={on 0.84pt off 2.51pt}]  (267.5,243) -- (374.5,170) ;

\draw  [dash pattern={on 0.84pt off 2.51pt}]  (374.5,170) -- (385.5,57) ;

\draw    (64.5,252) -- (341.5,86) ;

\draw  [dash pattern={on 0.84pt off 2.51pt}]  (385.5,57) -- (341.5,86) ;

\draw (187,262) node  [align=left] {$\displaystyle ( m_{1} -t) v_{1}$};
\draw (452,110) node  [align=left] {$\displaystyle ( m_{2} -n+t) v_{2}$};
\draw (301,260) node  [align=left] {$\displaystyle tv_{1}$};
\draw (373,223) node  [align=left] {$\displaystyle ( n-t) v_{2}$};
\draw (307,202) node  [align=left] {$\displaystyle nv$};


\end{tikzpicture}
\caption{Replacing $tv_{1}$ and $( n-t) v_{2}$ by $nv$ gains more area than it loses.}
    
\end{figure}
Clearly, the area of the quadrilateral (grows linearly in $m_2$) is bigger than the area of the triangle (fixed) when $m$ is large enough, which finishes the proof.
\\
\end{proof}

\section{The Heisenberg Group - Proof of Theorem B.}\label{sec5}

In this section $G$ denotes the discrete Heisenberg group $H_3(\bbZ)$,
that is $G$ is in bijection with $\bbZ^3$ as a set and the multiplication in $G$ is given by
\[ (x_{1},y_{1},z_{1})(x_{2},y_{2},z_{2})=(x_{1}+x_{2},y_{1}+y_{2},z_{1}+z_{2}+x_{1}y_{2}). \]
Observe that the center $Z<G$ consists of the elements of the form $(0,0,z)$ and that $G/Z$ is naturally isomorphic to $\bbZ^2$.
In particular $G'=Z$.
Denote the abelianization map $\pi:G\to \bbZ^2$.
For a given generating set $S\subset G$ we denote
$\bar{S}=\pi(S)$.
This is a generating set for $\bbZ^2$.
We denote
by
$|\cdot|_S$ the group norm on $G$ and by $|\cdot|_{\bar{S}}$ the corresponding group norm on $\bbZ^2$.

\begin{prop} \label{Heis}
Let $S\subset G$ be a finite symmetric generating
set for $G=H_{3}(\mathbb{Z})$.
Then there exist constants $L,C\geq 0$ such that
for every $(x,y,z)\in G$,
\[ |z|\geq L\max\left\{ x^{4},y^{4}\right\} \quad \Rightarrow \quad
\Big| |(x,y,z)|_{S}-\left(\sqrt{\frac{|z|}{\gamma}}-|(x,y)|_{\bar{S}}\right) \Big| \leq C, \]
where $\gamma=\gamma_{\bar{S}}$ is the isoperimetric constant given in Theorem~\ref{iso thm}.
\end{prop}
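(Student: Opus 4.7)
The plan is to establish matching bounds $|g|_{S} \ge \sqrt{|z|/\gamma} - |(x,y)|_{\bar{S}} - O(1)$ and $|g|_{S} \le \sqrt{|z|/\gamma} - |(x,y)|_{\bar{S}} + O(1)$, using the classical link between the third coordinate of a product in $H_{3}(\bbZ)$ and the signed area of the projected polygon in $\bbZ^{2}$. Writing each $s \in S$ as $(a_{s},b_{s},c_{s})$, induction on the Heisenberg law gives that a word $w = s_{i_{1}}\cdots s_{i_{n}}$ with $u_{k} = (a_{i_{k}},b_{i_{k}})$ evaluates to $(\sum_{k} u_{k},\; \sum_{k} c_{i_{k}} + \sum_{k<l} a_{i_{k}} b_{i_{l}})$. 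When $\sum_{k} u_{k} = 0$, expanding $(\sum_{k} a_{i_{k}})(\sum_{l} b_{i_{l}}) = 0$ and comparing with $2a(P) = \sum_{k<l}\det(u_{k},u_{l})$ yields $\sum_{k<l} a_{i_{k}} b_{i_{l}} = a(P) - \tfrac{1}{2} \sum_{k} a_{i_{k}} b_{i_{k}}$, so the third coordinate of any word whose projected polygon $P$ is closed equals $a(P)$ plus an $O(n)$ error with implicit constants depending only on $S$.

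\textbf{Lower bound.} Let $w$ be a geodesic word for $g=(x,y,z)$ of length $n$, and let $w'$ be a lift to $S$ of any $\bar{S}$-geodesic for $(-x,-y)$. Then $ww'$ evaluates in $G$ to $(0, 0, z + z_{w'} - xy)$ by the Heisenberg multiplication, and applying the identity above to the closed polygon $P = \pi(ww')$ of length $n + |(x,y)|_{\bar{S}}$ gives $z = a(P) + xy - z_{w'} + O(n + |(x,y)|_{\bar{S}})$. Under the hypothesis $|z| \ge L\max\{x^{4},y^{4}\}$, we have $|(x,y)|_{\bar{S}} = O(|z|^{1/4})$, $|xy| = O(\sqrt{|z|})$, and $|z_{w'}| = O(|(x,y)|_{\bar{S}}^{2}) = O(\sqrt{|z|})$; we may assume WLOG that $n \le 2\sqrt{|z|/\gamma}$, so every error term is $O(\sqrt{|z|})$. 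Theorem~\ref{iso thm} bounds $|a(P)| \le \gamma(n + |(x,y)|_{\bar{S}})^{2}$, so $|z| \le \gamma(n + |(x,y)|_{\bar{S}})^{2} + O(\sqrt{|z|})$; extracting square roots yields $n \ge \sqrt{|z|/\gamma} - |(x,y)|_{\bar{S}} - O(1)$.

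\textbf{Upper bound.} For the matching upper bound we construct a closed polygon $P$ in $\bbZ^{2}$ of combinatorial length $\approx \sqrt{|z|/\gamma}$ whose last $|(x,y)|_{\bar{S}}$ edges form an $\bar{S}$-geodesic from $(x,y)$ back to $0$, and whose signed area matches the target up to the linear error above; cutting $P$ at $(x,y)$ then yields the desired word of length $l(P) - |(x,y)|_{\bar{S}}$. Such a $P$ is built starting from a rescaled optimal polygon $kP_{0}$ given by Theorem~\ref{iso thm} with $k \approx \sqrt{|z|/\gamma}/l(P_{0})$, so that $kP_{0}$ is a closed polygon of length $\approx \sqrt{|z|/\gamma}$ and area $\gamma L^{2}$. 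By Proposition~\ref{genset} the edges of $kP_{0}$ are drawn from $\mathrm{ext}(\mathrm{conv}(\bar{S}))$ with multiplicity of order $k = O(\sqrt{|z|})$, while any $\bar{S}$-geodesic for $-(x,y)$ uses only $O(|z|^{1/4})$ letters, so we can extract such a geodesic from the multiset of $kP_{0}$ and reorder the remaining edges to the initial segment. The resulting polygon has identical length to $kP_{0}$ (hence word length $L - |(x,y)|_{\bar{S}} + O(1)$) and signed area differing from $a(kP_{0})$ by a lower-order perturbation; this residual discrepancy is then absorbed by a further sequence of adjacent-edge swaps in the initial segment, each of which preserves word length and endpoint while changing $a(P)$ by a bounded amount, together with the freedom to choose lifts in $S$ above each edge of $\bar{S}$ to tune the linear terms $\sum c_{i_{k}}$ and $\tfrac{1}{2}\sum a_{i_{k}}b_{i_{k}}$.

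\textbf{Main obstacle.} The lower bound is a clean application of the isoperimetric inequality once one interprets the third coordinate as a signed area; the difficulty is concentrated in the upper bound, which requires engineering a polygon that simultaneously (i) has near-optimal isoperimetric ratio, (ii) passes through the prescribed vertex $(x,y)$ with a geodesic closing arc, and (iii) has signed area matching $z$ exactly, all within an $O(1)$ length budget. The quartic hypothesis $|z| \ge L\max\{x^{4},y^{4}\}$ is precisely what enables this: it guarantees $|(x,y)| = O(|z|^{1/4}) \ll \sqrt{|z|}$, so the endpoint constraint is a lower-order perturbation of the ambient polygon $kP_{0}$, leaving sufficient flexibility in the reordering of its edges and in the choice of lifts to hit the exact target $(x,y,z)$.
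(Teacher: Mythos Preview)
Your overall strategy---close up with a lifted $\bar S$-geodesic and read the third coordinate as a signed area with an $O(n)$ defect---matches the paper's (your opening computation is Lemma~\ref{area-z}), and your lower bound is correct and essentially the paper's argument via Proposition~\ref{hight}.

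The upper bound has the right outline but two genuine gaps. First, you propose to ``extract such a geodesic from the multiset of $kP_0$'', but an arbitrary $\bar S$-geodesic for $-(x,y)$ may use letters outside $\mathrm{ext}(\conv(\bar S))$, and by Proposition~\ref{genset} such letters do not occur in $kP_0$ at all. The paper uses the sharper fact that one may choose the $\bar S$-geodesic in the form $a_i\bar s_i+a_j\bar s_j$ with $\bar s_i,\bar s_j$ two \emph{adjacent} extreme points of $\conv(\bar S)$. Adjacency is the point: the block $s_i^{a_i}s_j^{a_j}$ then occurs as a \emph{contiguous} subword of the ordered polygon $kP_0$ (once $k\ge\max\{a_i,a_j\}$), so one simply cyclically permutes and cancels---no reordering of edges, hence no uncontrolled change of area.

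Second, your device for hitting the exact value of $z$---adjacent-edge swaps together with freedom in the choice of lifts---is not justified and can fail outright. A swap of adjacent edges $u,u'$ changes the area by $\det(u,u')$, and there is no reason the set of such increments generates $\bbZ$; and if $\pi|_S$ happens to be injective there is no freedom in the lifts at all. The paper avoids this entirely: it fixes the geodesic tail $w$ first, computes its height $h(w)$, and then invokes Proposition~\ref{hight} to produce a word $w'$ representing $(0,0,z')$ for the exactly required $z'$, of length $\sqrt{z'/\gamma}+O(1)$. That proposition absorbs the integrality issue once and for all by multiplying a suitable rescaling of $P_0$ by one of a fixed finite list of words representing $(0,0,1),\dots,(0,0,l(P_0))$. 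After the cyclic permutation, $w'w^{-1}$ is the desired word and no further tuning is needed.
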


\begin{cor} \label{Heis is EH}
The discrete Heisenberg group satisfies EH.
\end{cor}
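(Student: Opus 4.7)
My plan is to derive EH directly from Proposition~\ref{Heis}, splitting the analysis into two regimes according to whether $y = (x,y,z)$ lies in the hypothesis $|z| \geq L \max(x^4, y^4)$ of the proposition. Throughout I write $a = |(x,y)|_{\bar S}$, fix $g_0 = (0,0,z_0) \in G' = Z$, and aim to produce a constant $D$ depending only on $S$ (and the constants $C, L, \gamma$ from Proposition~\ref{Heis}) together with $n = n(g_0) \in \bbN$ so that $\big| |g_0 y|_S - |y|_S \big| \leq D$ whenever $|y|_S > n$.

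In the \textbf{Heisenberg regime}, where both $y$ and $g_0 y = (x,y,z+z_0)$ satisfy the hypothesis of the proposition, applying Proposition~\ref{Heis} to each side gives
\[ \big| |y|_S - |g_0 y|_S \big| \leq \Big| \sqrt{|z|/\gamma} - \sqrt{|z+z_0|/\gamma} \Big| + 2C. \]
The root difference is at most $|z_0|/\sqrt{\gamma|z|}$ by the elementary inequality $|\sqrt u - \sqrt v| \leq |u-v|/(\sqrt u + \sqrt v)$, and this falls below $1$ once $|z| \geq z_0^2/\gamma$. On the other hand, Proposition~\ref{Heis} forces $|z| \geq \gamma(|y|_S - C)^2$ in this regime, so choosing $n$ large enough in terms of $|z_0|$ (e.g.\ $n \geq C + |z_0|/\gamma$) guarantees this, producing the uniform bound $D := 2C + 1$ here.

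In the complementary \textbf{abelian-like regime}, where $|z| < L\max(x^4, y^4)$ or symmetrically $|z+z_0| < L\max(x^4, y^4)$, the proposition is inapplicable and I argue combinatorially. A transposition $s_1 s_2 \mapsto s_2 s_1$ of two consecutive letters in a word preserves its length while multiplying the represented group element by the central factor $(0, 0, -\det(\bar s_1, \bar s_2))$. Since $S$ generates $G$, the set $\bar S$ generates $\bbZ^2$, and expressing $e_1, e_2$ as integer combinations of elements of $\bar S$ shows that $1 = \det(e_1, e_2)$ lies in the subgroup of $\bbZ$ spanned by $\{\det(\bar s_1, \bar s_2) : s_1, s_2 \in S\}$; hence that subgroup is all of $\bbZ$, and $z_0$ can be realized by an integer combination of these determinants using $O(|z_0|)$ summands. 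Performing the corresponding transpositions on a near-geodesic word for $y$ produces a word of the same length representing $g_0 y$, so $|g_0 y|_S \leq |y|_S$; the reverse inequality follows by symmetry.

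The \textbf{main obstacle} lies in the abelian-like regime: one must verify that a near-geodesic word for $y$ in this regime admits enough disjoint transposition sites (consecutive pairs of distinct letters). The regime $|z| < L\max(x^4, y^4)$ forces $a$ to grow with $|y|_S$, so the word's projection to $\bbZ^2$ is a long path that generically alternates between several generators and supplies many swap sites; the degenerate case where $(x,y)$ is concentrated along a single generator direction will need to be handled separately, typically by inserting a short bounded auxiliary detour whose length depends on $S$ and $z_0$ and which introduces the missing swap sites.
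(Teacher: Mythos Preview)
Your Heisenberg-regime argument is essentially the paper's: apply Proposition~\ref{Heis} to both $y$ and $g_0 y$, bound the square-root difference, and push the $z_0$-dependence into $n$. The difficulty is in the complementary regime, and there your argument has a genuine gap.

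Recall that in the definition of EH the constant $D$ is quantified \emph{before} $g_0$, so it must be uniform in $z_0$; only $n$ may depend on $z_0$. Your transposition move, where it applies, costs nothing and so contributes nothing to $D$ --- but it applies only when the letters actually occurring in a geodesic for $y$ already realise $z_0$ through their pairwise determinants. You invoke the fact that $\{\det(\bar s,\bar t):s,t\in S\}$ generates $\bbZ$, but a specific geodesic may use only a proper subset of $S$: two generators $s_1,s_2$ with $\det(\bar s_1,\bar s_2)=d>1$ give access only to $d\bbZ$, and a single generator gives nothing. So the ``degenerate case'' is broader than the single-direction situation you isolate. More seriously, your proposed fix is a detour ``whose length depends on $S$ and $z_0$'': any length inserted here feeds directly into the bound on $\big||g_0 y|_S-|y|_S\big|$ and hence into $D$, so a $z_0$-dependent detour destroys exactly the uniformity you declared at the outset.

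The paper handles this regime by a different and cleaner mechanism. Once $|x|\geq z_0$ (or $|y|\geq z_0$), which is forced when $|z|$ is small and $w\notin B$, one cuts a geodesic as $w=w_1w_2$ with the $x$-coordinate $x_1$ of $w_1$ satisfying $\big|z_0-|x_1|\big|\leq M_x$, and uses the single conjugation identity
\[ (0,\pm1,0)\,w_1\,(0,\mp1,0)=(x_1,y_1,z_1\pm x_1). \]
Thus inserting one $(0,\pm1,0)$ and one $(0,\mp1,0)$ shifts the central coordinate by approximately $z_0$ at cost $2|(0,1,0)|_S$, and the leftover $(0,0,z_0\mp x_1)$ has norm bounded by $R=\max\{|(0,0,t)|_S:|t|\leq M_x\}$, a constant depending only on $S$. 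This gives $D$ independent of $z_0$ in one stroke. Your transposition idea \emph{can} be repaired along similar lines --- insert a fixed loop $s_2 s_2^{-1}$ of length $2$ with $\bar s_2$ not parallel to the dominant letter $\bar s_1$, slide the lone $s_2$ past about $z_0/|\det(\bar s_1,\bar s_2)|$ copies of $s_1$ (available once $n$ is large), and absorb the residue modulo $\det(\bar s_1,\bar s_2)$ by a word of length bounded in terms of $S$ alone --- but this has to be argued explicitly, and as written your proposal does not do so.
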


In particular, Corollary~\ref{Heis is EH} implies Theorem B via Proposition~\ref{EH}.

\begin{proof}[Proof of Proposition~\ref{Heis} $\Rightarrow$ Corollary~\ref{Heis is EH}]
Let $S\subset G$ be a finite symmetric generating
set.
We will show that $S$ satisfies EH.
Let $L,C$ and $\gamma$ be as in Proposition~\ref{Heis}.
Let $M_x=\max\{x~|~(x,y,z)\in S\}$ and $M_y=\max \{y~|~(x,y,z)\in S\}$,
and let $R=\max\{|(0,0,z)|~\big|~|z|\leq \max\{M_x,M_y\}\}$.
We set
\[ D=\max\left\{\frac{1}{2\sqrt{\gamma L}z_0}+2C,R+2|(0,1,0)| , R + 2|(1,0,0)| \right\}. \]
Let $g_0\in G'$.
$G'=Z$ so $g_0=(0,0,z_0)$ for some $z_0$.
We will assume $z_0>0$, as $S$ is symmetric.
Consider the set
\[ B=\{(x,y,z)~\big|~|x|,|y|<z_0,~|z|<Lz_0^4+z_0 \}. \]
This is a finite set, so there exists an $n$ such that
\[ \mbox{for all } w\in G, \quad\quad |w|_S>n \quad \Rightarrow \quad w\notin B. \]
We will be done by showing
\[ \mbox{for all } w\in G, \quad\quad w\notin B \quad \Rightarrow \quad \big| |g_0w|_S-|w|_S \big|\leq D. \]
Fix $w=(x,y,z) \notin B$.
Assume $|z|\geq Lz_0^4+z_0$.
Then, both $w$ and $g_0w$ have last coordinate $\geq Lz_0^4$ and by Proposition~\ref{Heis},

\[  
\begin{array}{rll}
 \big| |g_0w|-|w| \big| & =\big| |(x,y,z+z_0)|-|(x,y,z)| \big| & \\
 &\leq \left|\left(\sqrt{\frac{|z|+z_0}{\gamma}}-|(x,y)|_{\bar{S}}\right)-\left(\sqrt{\frac{|z|}{\gamma}}-|(x,y)|_{\bar{S}}\right) \right| +2C  \\
& = \left|\sqrt{\frac{|z|+z_0}{\gamma}}-\sqrt{\frac{|z|}{\gamma}} \right| +2C  \leq
\left|\frac{z_0}{\sqrt{\gamma}\left(\sqrt{|z|+z_0}+\sqrt{|z|}\right)} \right| +2C  \\
& \leq \left|\frac{z_0}{\sqrt{\gamma}\left(2\sqrt{ Lz_0^4}\right)} \right| +2C  =\frac{1}{2\sqrt{\gamma L}z_0}+2C  \leq D.
\end{array}
\]

Otherwise, $z<Lz_0^4+z_0$ and since $(x,y,z)\notin B$ we must have $|x|\geq z_0$ or $|y| \geq z_0$.
Assume $|x|\geq z_0$.
By considering an $S$-geodesic from $e$ to $w$, we can find words
$w_1=(x_1,y_1,z_1)$ and $w_2=(x_2,y_2,z_2)$ such that
$w=w_1w_2$, $|w|=|w_1|+|w_2|$ and $\big| z_0-|x_1| \big|\leq M_x$.
Check that
\[ (0,\pm1,0)(x_1,y_1,z_1)(0,\mp1,0)=(x_1,y_1,z_1\pm x_1). \]
Then we have 
\[ g_0w=
\left\{
\begin{array}{ll}
(0,0,z_0-x_1)(0,+1,0)w_1(0,-1,0)w_2 & \mbox{for } x_1\geq z_0 \\
(0,0,z_0+x_1)(0,-1,0)w_1(0,+1,0)w_2 & \mbox{for } -x_1\geq z_0
\end{array}\right.
\]
and in any case
\[ \big| |g_0w|-|w| \big| \leq \left|\left(0,0,| z_0-|x_1||\right)\right|+2|(0,1,0)| \leq R+ 2|(0,1,0)|\leq D. \]
The case $|y|\geq z_0$ is similar,
and completes the proof.
\end{proof}

The rest of the section is devoted to the proof of Proposition~\ref{Heis}.

\begin{lemma}\label{area-z}
Let $S$ be a finite symmetric set of generators for $G=H_3(\bbZ)$. 
Let $w$ be a word of length $n$ in the free group generated by $S$
which has image $(0,0,z)\in G$. There exists $K=K(S)$, such that if we  $a(w)$ denotes the signed Euclidean area of the corresponding polygon obtained in $\bbZ^2$, then 
$$|a(w)-z|\leq Kn.$$
\end{lemma}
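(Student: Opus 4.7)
The plan is to expand the word in the Heisenberg multiplication and compare the $z$-coordinate so obtained with the signed area formula directly, showing the discrepancy is a sum of $n$ uniformly bounded terms. I would write $w = s_1 s_2 \cdots s_n$ with $s_i = (x_i, y_i, z_i) \in S$ and, by a one-line induction using the rule $(x_1,y_1,z_1)(x_2,y_2,z_2) = (x_1+x_2, y_1+y_2, z_1+z_2 + x_1 y_2)$, identify the value of $w$ in $G$ as
\[
w = \Big(\sum_{i} x_i,\; \sum_{i} y_i,\; \sum_{i} z_i + \sum_{i<j} x_i y_j\Big).
\]
Since the image is $(0,0,z)$, the horizontal sums $\sum x_i$ and $\sum y_i$ both vanish, and $z = \sum_i z_i + \sum_{i<j} x_i y_j$.

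Next, I would compare the asymmetric sum $\sum_{i<j} x_i y_j$ with the antisymmetric signed area $a(w) = \tfrac{1}{2}\sum_{i<j}(x_i y_j - x_j y_i)$. Writing
\[
\sum_{i<j} x_i y_j = a(w) + \tfrac{1}{2}\sum_{i<j}(x_i y_j + x_j y_i) = a(w) + \tfrac{1}{2}\sum_{i\neq j} x_i y_j,
\]
and then using the polarization identity $0 = \big(\sum_i x_i\big)\big(\sum_j y_j\big) = \sum_i x_i y_i + \sum_{i\neq j} x_i y_j$ (valid because $\sum x_i = \sum y_i = 0$), I get $\sum_{i\neq j} x_i y_j = -\sum_i x_i y_i$, hence
\[
z - a(w) = \sum_{i} z_i - \tfrac{1}{2}\sum_{i} x_i y_i = \sum_{i}\big(z_i - \tfrac{1}{2} x_i y_i\big).
\]

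Finally, I would conclude by setting
\[
K = K(S) \defq \max_{(x,y,z)\in S} \big|z - \tfrac{1}{2}xy\big|,
\]
which is finite since $S$ is finite. The triangle inequality on the sum of $n$ terms then gives $|z - a(w)| \leq Kn$, as claimed.

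There is no genuine obstacle here: once one recognizes that the $z$-coordinate accumulated by successive Heisenberg multiplications differs from the antisymmetric area functional only by the diagonal correction $\tfrac{1}{2}\sum x_i y_i$ (plus the per-letter constants $z_i$), the estimate is immediate. The only point requiring a sentence of care is the passage from the asymmetric sum $\sum_{i<j} x_i y_j$ (which appears in the group law) to the antisymmetric one (which defines $a$), and this is precisely where the vanishing $\sum x_i = \sum y_i = 0$ is used.
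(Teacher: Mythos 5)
Your proof is correct and follows essentially the same route as the paper: both expand the Heisenberg group law to get $z=\sum_i z_i+\sum_{i<j}x_iy_j$ and then antisymmetrize this sum to compare it with $a(w)$, reducing the discrepancy to a sum of $n$ terms each bounded by a constant depending only on $S$. The only difference is the mechanism of antisymmetrization --- you split off the symmetric part and kill it using $\left(\sum_i x_i\right)\left(\sum_i y_i\right)=0$, whereas the paper computes the $z$-coordinate of $w^{-1}$ and averages --- and your resulting identity $z-a(w)=\sum_i\left(z_i-\tfrac{1}{2}x_iy_i\right)$ is in fact the correct one (the paper's displayed version drops the factor $\tfrac{1}{2}$ on $\sum_i x_iy_i$, a harmless slip since $K$ is in any case just a maximum over the finite set $S$).
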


\begin{proof}
Let $w=(s_1s_2\cdots s_n)$, $s_i=(x_i,y_i,z_i)\in S$.
Then we have
\[ a(w)=\half \sum_{i<j} (x_iy_j-x_jy_i) \quad \mbox{and} \quad 
z=\sum_{i<j} x_iy_j +\sum_i z_i. \] 
We consider the word $w^{-1}=(s_n^{-1}\cdots s_1^{-1})$
and compute its $z$-coordinate. Using $s_i^{-1}=(-x_i,-y_i,x_iy_i-z_i)$ we get
\[ -z= \sum_{i> j} (-x_i)(-y_j) +\sum_i  (x_iy_i-z_i). \]
Thus we get 
\[ z=\half(z-(-z))= \half\sum_{i<j} (x_iy_j-x_jy_i) +\sum_i (z_i-x_iy_i) \] 
and
\[  z-a(w)=\sum_i z_i-x_iy_i.\]
The lemma follows by setting $K=\max\{|z-xy|: (x,y,z)\in S\}$.

\end{proof}

\begin{prop} \label{hight}
Let $S\subset G$ be a finite symmetric generating
set for $G=H_{3}(\mathbb{Z})$.
Then there exists a constant $C\geq 0$ such that
for every $(0,0,z)\in G$,
\[ \left||(0,0,z)|  - \frac{\sqrt{z}}{\gamma} \right| \leq C\]
where $\gamma=\gamma_{\bar{S}}$ is the isoperimetric constant given in Theorem~\ref{iso thm}.

\end{prop}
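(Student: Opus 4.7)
The plan is to prove the proposition by establishing matching upper and lower bounds on $|(0,0,z)|_S$, each accurate up to an additive $O(1)$ constant.

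For the lower bound I would invoke Lemma~\ref{area-z} together with the isoperimetric inequality (Theorem~\ref{iso thm}). If $w$ is any $S$-word of length $n$ whose image in $G$ is $(0,0,z)$, then its projection to $\bbZ^2$ is a $\bar{S}$-polygon of perimeter $n$; Lemma~\ref{area-z} gives $|a(w)-z|\leq Kn$, while Theorem~\ref{iso thm} gives $|a(w)|\leq \gamma n^2$. Combining yields $|z|\leq \gamma n^2+Kn$, and solving this quadratic inequality produces $n\geq \sqrt{|z|/\gamma}-K/(2\gamma)$. This direction is essentially routine.

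The upper bound requires an explicit construction of a short word. Let $P^*=(u_1,\ldots,u_m,-u_1,\ldots,-u_m)\in \mathcal{P}_{so}(\bar{S})$ be an optimal symmetric ordered polygon supplied by Theorem~\ref{iso thm}, and consider the family of symmetric ordered polygons $P_{\mathbf{a}}$ in which $u_i$ (and correspondingly $-u_i$) is repeated $a_i$ times, for $\mathbf{a}=(a_1,\ldots,a_m)\in\bbZ_{\geq 0}^m$. These have perimeter $2\sum a_i$ and area $Q(\mathbf{a}):=\sum_{i<j}a_i a_j \det(u_i,u_j)$. Over the reals, the minimum of $\sum a_i$ subject to $Q(\mathbf{a})=z$ is attained by scaling $P^*$ and equals $\half\sqrt{z/\gamma}$; rounding such an optimum to $\bbZ_{\geq 0}^m$ yields $\mathbf{a}$ with $\sum a_i=\half\sqrt{z/\gamma}+O(1)$ but with $Q(\mathbf{a})$ only within $O(\sqrt{z})$ of $z$. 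A bounded number of unit perturbations $a_i\to a_i\pm 1$ then shifts $Q$ by increments of order $\sqrt{z}$, and since $\bar{S}$ spans $\bbZ^2$ the $\gcd$ of $\{\det(u,v):u,v\in \bar{S}\}$ equals $1$, so the attainable shifts form a sufficiently dense subset of $\bbZ$ to land exactly on any prescribed integer in the relevant range. Finally, lifting $P_{\mathbf{a}}$ to an $S$-word and making at most $O(1)$ further letter-level adjustments to absorb the $O(n)$ correction of Lemma~\ref{area-z} produces an $S$-word of length $\sqrt{|z|/\gamma}+O(1)$ representing $(0,0,z)$.

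The main obstacle is the Diophantine fine-tuning in the upper bound: verifying that $O(1)$ integer perturbations of $\mathbf{a}$ really hit the prescribed $z$ exactly rather than merely approximating it to within $O(\sqrt{z})$. Equivalently, one must show that for every sufficiently large $N$, the set of $z$-coordinates realized by $S$-words of length at most $N$ representing central elements covers every integer in an interval of the form $[-\gamma N^2+O(N),\gamma N^2-O(N)]$. This density statement relies on the integrality input $\gcd\{\det(u,v):u,v\in \bar{S}\}=1$ inherited from $\bar{S}$ spanning $\bbZ^2$, and the delicate part is coordinating the polygon-level correction with the letter-level lift correction so that their combined overhead is only $O(1)$.
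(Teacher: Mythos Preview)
Your lower bound is correct and coincides with the paper's: combine Lemma~\ref{area-z} with the defining inequality $|a(w)|\le\gamma\,l(w)^2$ and solve the resulting quadratic in $l(w)$.

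For the upper bound the paper takes a different and much more direct route than you propose: it does \emph{not} attempt to tune the multiplicity vector so that the polygon area hits a prescribed integer. It fixes once and for all the optimal polygon $P_0$ from Theorem~\ref{iso thm} together with a finite list of correction words $w_1,\dots,w_{z_0}$, where $w_r$ represents $(0,0,r)$; given $z$ it writes $z=kz_0+r$ with $1\le r\le z_0$, lifts the scaled polygon $kP_0$ to a word $w'$, and outputs $w=w'w_r$. The length bound then comes from $l(w')^2=a(w')/\gamma$ and Lemma~\ref{area-z}, the correction $w_r$ contributing only the bounded constant $M=\max_r l(w_r)$. So the paper replaces your two-stage perturbation scheme by a single multiplicative correction drawn from a finite list, bypassing the Diophantine issue.

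Your scheme, by contrast, has a genuine gap exactly where you flag it. A bounded number of unit moves $a_i\to a_i\pm1$ cannot cover an interval of width $\Theta(\sqrt z)$. Each such move shifts $Q(\mathbf a)$ by $\partial_iQ(\mathbf a)=\sum_{j\ne i} a_j\,|\det(u_i,u_j)|$; and since $\mathbf a$ is, up to $O(1)$, a scalar multiple of the maximiser of $Q$ on the simplex $\sum a_i=\text{const}$, the Lagrange condition forces \emph{all} of these partials to agree to leading order---they are each $k\mu+O(1)$ for the same multiplier $\mu$. Hence $O(1)$ unit perturbations produce only $O(1)$ distinct values of $Q$, lying in $O(1)$ bands of width $O(1)$ spaced $\approx k\mu$ apart. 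The $\gcd$ condition on $\{\det(u,v):u,v\in\bar S\}$ is true, but it only guarantees that \emph{unbounded} integer combinations of the determinants reach $1$; it gives no bound on the number of terms. The same objection applies to your letter-level step: \emph{appending} $O(1)$ letters to a word representing a central element moves the $z$-coordinate by $O(1)$ each, not by the $O(n)=O(\sqrt z)$ needed to absorb the Lemma~\ref{area-z} discrepancy. To make an $O(1)$-letter correction move $z$ by $\Theta(\sqrt z)$ you would have to insert a generator and its inverse at two far-apart positions along the lift of $kP_0$ (the shift is then $\det(\bar s,\pi(W_2))$ for the intervening sub-word $W_2$), and then argue that the available shifts cover every integer in the required range; none of this is in your proposal.
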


\begin{proof}
Assume without loss of generality that $z\geq 0$.
For any $w$ that represents $(0,0,z)$ we have
$l(w)^2\gamma \geq |a(w)| \geq z-Kl(w)$ (with $K$ from Lemma \ref{area-z}),
hence 
\[ (l(w)+K/2)^2 \geq z/\gamma \quad \mbox{and} \quad 
l(w) \geq \sqrt{z/\gamma}-K/2. \]
By Theorem~\ref{iso thm}
we know that there exists a polygon $P_0$ such that 
for every $k\in \bbN$, $a(kP_0)/l(kP_0)^2=\gamma$.
Denote $z_0=l(P_0)$.
We fix $z_0-1$ words $w_1,\ldots,w_{z_0}$ representing the elements $(0,0,1),\ldots,(0,0,z_0)$ correspondingly,
and set $M=\max\{l(w_r)~|~r=1,\ldots,z_0\}$.
We write $z=kz_0+r$ for some $k\in\bbN$ and $1\leq r\leq z_0$.
We consider the word $w'$ representing the polygon $kP_0$.
Then
\[ l(w')^2 = a(w')/\gamma \leq (kz_0+Kl(w'))/\gamma. \]
Thus
\[ (l(w')-K/2\gamma)^2 \leq kz_0/\gamma + (K/2\gamma)^2. \]
Since $kz_0<z$ we get
\[ l(w') \leq \sqrt{z/\gamma} +K/\gamma, \]
and, since $w=w'w_r$, 
\[ l(w)\leq l(w')+M\leq  \sqrt{z/\gamma}+M+K/\gamma. \] 

\end{proof}

\begin{prop} \label{heightcontrol}
There exists $E>0$, such that for any word $w$, we have
$ |h(w)|\leq El(w)^{2}$.
\end{prop}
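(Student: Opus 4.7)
The plan is to prove Proposition~\ref{heightcontrol} by direct computation: the height coordinate of a product of $n$ letters in $H_3(\bbZ)$ is a homogeneous expression of degree (at most) two in the coordinates of the letters, and since the letters range over the finite set $S$, everything is bounded. No genuine isoperimetric input is needed here; this proposition is essentially the ``easy direction'' that complements Theorem~\ref{iso thm} and Proposition~\ref{hight}.

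Concretely, I would write $w = s_1 s_2 \cdots s_n$ with each $s_i = (x_i, y_i, z_i) \in S$. An induction on the length, using the Heisenberg multiplication rule $(x_1,y_1,z_1)(x_2,y_2,z_2)=(x_1+x_2,y_1+y_2,z_1+z_2+x_1y_2)$, yields the closed formula
\[
h(w) \;=\; \sum_{i<j} x_i y_j \;+\; \sum_{i=1}^{n} z_i,
\]
which is exactly the identity already used implicitly in the proof of Lemma~\ref{area-z}. I would then set
\[
M \;=\; \max\bigl\{\,|x|,\,|y| \;:\; (x,y,z)\in S\,\bigr\}, \qquad K' \;=\; \max\bigl\{\,|z| \;:\; (x,y,z)\in S\,\bigr\},
\]
which are finite because $S$ is finite, and apply the triangle inequality:
\[
|h(w)| \;\leq\; \sum_{i<j}|x_i||y_j| + \sum_i |z_i| \;\leq\; \tfrac{n(n-1)}{2}\,M^{2} + n\,K'.
\]
Choosing $E = \tfrac{1}{2}M^{2} + K'$ (and handling the trivial case $l(w)=0$ separately, where $h(w)=0$), one immediately obtains $|h(w)| \leq E\, l(w)^{2}$, which is the claim.

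There is essentially no obstacle here; the only mild point is that one could, alternatively, deduce the bound from Lemma~\ref{area-z} together with the trivial area estimate $|a(w)| \leq \tfrac{1}{2}n(n-1) \max_{u,v \in \bar S}|\det(u,v)|$ applied to the polygonal path $\pi(s_1)\cdots\pi(s_n)$ in $\bbR^2$. Either approach gives the same quadratic bound, so I would use the direct computation above, as it avoids invoking the (more delicate) geometric setup and yields the constant $E$ explicitly in terms of $S$.
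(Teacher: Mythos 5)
Your proof is correct, and it takes a genuinely more elementary route than the paper. The paper first treats the case $(x,y)=(0,0)$, where the word closes up into a $\bar S$-polygon and the quadratic bound on $|h(w)|$ follows from the definition of the isoperimetric constant $\gamma$ (together with Lemma~\ref{area-z}); for general $(x,y)$ it appends an auxiliary word $w'=(-x,0,0)(0,-y,0)$ of length $O(l(w))$ to close the loop and reduces to the first case. You instead bound the explicit formula $h(w)=\sum_{i<j}x_iy_j+\sum_i z_i$ directly by the triangle inequality, which is exactly the identity already derived in the proof of Lemma~\ref{area-z}, so nothing new is needed. Your argument is shorter, avoids the geometric setup entirely, and produces an explicit constant $E=\tfrac12 M^2+K'$; the paper's detour through $\gamma$ buys nothing here, since the proposition only requires some quadratic bound, not a sharp one. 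The one point worth stating explicitly if you write this up is the induction establishing the closed formula for $h(s_1\cdots s_n)$ from the multiplication law, but that is a one-line computation.
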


\begin{proof}

Let $w$ be a word to $(x,y,h(w))$. If $(x,y)=(0,0)$ then by definition of $\gamma$ we have 
\[|h(w)|\leq l(w)^{2}/\sqrt{\gamma},\]
and we are done. 

If $(x,y)\neq(0,0)$, let $w'=(-x,0,0)(0,-y,0)$. The length of $w'$ is bounded by
\[\begin{array}{ll}
    l(w') & \leq |x|\cdot |(1,0,0)|+|y|\cdot |(0,1,0)| \\
     & \leq 2 \max \{|x|,|y|\} \max\{|(0,1,0)|,|(1,0,0)|\}.
\end{array}\]

Let $m_x=\max\{|x|,(x,y,z)\in S\}, m_y=\max\{|y|,(x,y,z)\in S\}$. 
\[ l(w) \geq \frac{\max\{|x|,|y|\}}{\max\{m_x,m_y\}}.\]

Therefore, for $D=2 \max\{m_x,m_y\}\max\{|(0,1,0)|,|(1,0,0)|\}$ we have 
\[l(w')\leq Dl(w).\]

Note that $ww'$ is a word representing $(0,0,h(w))$, hence  
\[ |h(w)|=|h(ww')|\leq (D+1)^2l(w)^{2}/\sqrt{\gamma}, \]
and we are done. \\

\end{proof}

Now we are ready to prove proposition ~\ref{Heis}.

\begin{proof}
Let $(x,y,z)\in G$ as in proposition. Let $\bar{w}$ be a geodesic word in $\mathbb{Z}^{2}$ from the origin to $(-x,-y)$ in the generators $S'=ext(conv(\bar{S}))$. We can assume $(x,y)\in span\{S'\}$, i.e. the geodesic word is of form $(-x,-y)=a_i \bar{s_i} + a_j \bar{s_j}$, where $\bar{s_i},\bar{s_j} \in S'$ are two adjacent generators, i.e. they share a face in $conv(\bar{S})$. 
Clearly, $|(-x,-y)|_{\bar{S}}=a_i +a_j$. 

Let $w=s_i^{a_i}s_j^{a_j}$ be a lift of $\bar{w}$ to the Heisenberg group. 
The word $w$ represents $(-x,-y,h(w))$ and $|(-x,-y,h(w))|_S=|(-x,-y)|_{\bar{S}}$. The length $l(w) = a_i+a_j \leq 2 \max\{|x|,|y|\}$. From Proposition~\ref{heightcontrol} there exists $E\geq 0$ such that $h(w)\leq E l(w)^2\leq 2E \max\{x^2,y^2\}$.  

By the triangle inequality
\[|(x,y,z)|+|(-x,-y,h(w))| \geq |(0,0,z-xy+h(w))|. \]

For $L\geq 4E^2+4E+1 $  we get
 \[
 \begin{array}{ll}
     (h(w)-xy)^2 & \leq  h(w)^2 +2xy h(w) +x^2y^2  \\
      &\leq  4E^2 \max\{x^4,y^4\} +4E\max\{x^4,y^4\} + \max\{x^4,y^4\}  \leq z.
 \end{array}  \]
hence
\[
    \left| \sqrt{z-xy+h(w)} - \sqrt{z} \right| \leq 1 .
\]
Therefore, by Proposition \ref{hight} we obtain
\[ |(x,y,z)| \geq \sqrt{z/\gamma} - |(x,y)|_{\bar{S}} + C+1. \]

For the other direction in the last inequality, we will construct a word to $(x,y,z)$ of the needed length.
Let $w=s_i^{a_i}s_j^{a_j}$ be, as before, a word to $(-x,-y,h(w))$. Let $w'$ be a word to $(0,0,z-h(w))$ obtained as in Proposition~\ref{hight}, namely it is given by a multiple of $P_0$ multiplied by some bounded commuting factor $w_r$.    

From the argument as  before  we have $\big| \; |(0,0,z-h(w)|-|(0,0,z)| \; \big|\leq 1$.

For $L \geq 2 a(P_0)$ we get
\[ \frac{z}{a(P_0)} \geq 2\max\{ |x|^4,|y|^4 \} \geq \max 2\{a_i,a_j\}. \]
By Proposition \ref{genset} all elements from $S'$ appear in $P_0$ and, hence, their lifts appear in $w'$. Moreover, since $\bar{s_i}$ and $\bar{s_j}$ are adjacent in $P_0$ and by the last inequality we know that a subword $w=s_i^{a_i}s_j^{a_j}$ appears in $w'$. Since $h(w')$ doesn't depend on cyclic permutation of letters, we can assume that $w$ appears as suffix in $w'$. Therefore, $w'w^{-1}$ has cancellation and is a word to $(x,y,h(w)+h(w'))=(x,y,z)$ of length $\leq \sqrt{z/\gamma} - |(x,y)|_{\bar{S}} + C+1 $.

\end{proof}

\newpage

\bibliographystyle{plain}
\bibliography{horofunctions}

\end{document}